\newtheoremstyle{natural}
{\parsep}   
{\parsep}   
{\normalfont}  
{0pt}       
{\bfseries} 
{.}         
{5pt plus 1pt minus 1pt} 
{}          
\newtheoremstyle{remark}
{\parsep}   
{\parsep}   
{\normalfont}  
{0pt}       
{\itshape} 
{.}         
{5pt plus 1pt minus 1pt} 
{}          
\theoremstyle{plain}
\newtheorem{thm}{Theorem}[section]
\newtheorem{cor}[thm]{Corollary}
\newtheorem{lem}[thm]{Lemma}
\newtheorem{prp}[thm]{Proposition}
\theoremstyle{remark}
\newtheorem*{ntn}{Notation}
\numberwithin{equation}{section}
\crefname{thm}{theorem}{theorems}
\crefname{lem}{lemma}{lemmata}
\crefname{prp}{proposition}{propositions}
\crefname{cor}{corollary}{corollaries}
\newcommand{\Z}{\mathbb{Z}}
\newcommand{\Q}{\mathbb{Q}}
\newcommand{\N}{\mathbb{N}}
\newcommand{\R}{\mathbb{R}}
\newcommand{\C}{\mathbb{C}}
\newcommand{\G}{\mathbb{G}}
\newcommand{\bs}{\backslash}
\newcommand{\cb}[1]{\left\{{#1}\right\}}
\newcommand{\cbm}[2]{\left\{{#1}\;\middle|\;{#2}\right\}}
\newcommand{\diag}{\operatorname{diag}}
\newcommand{\id}{\operatorname{id}}
\newcommand{\pb}[1]{\left\langle{#1}\right\rangle}
\newcommand{\ppmod}[1]{\hspace{-0.3cm}\pmod{#1}}
\newcommand{\ol}[1]{\overline{#1}}
\newcommand{\rb}[1]{\left({#1}\right)}
\newcommand{\rsa}{\rightsquigarrow}
\newcommand{\sbe}{\subseteq}
\newcommand{\vb}[1]{\left| {#1} \right|}
\newcommand{\vol}{\operatorname{vol}}
\newcommand{\y}{\operatorname{y}}
\newcommand{\Ad}{\operatorname{Ad}}
\newcommand{\Kl}{\operatorname{Kl}}
\newcommand{\GL}{\operatorname{GL}}
\newcommand{\SO}{\operatorname{SO}}
\newcommand{\Sp}{\operatorname{Sp}}
\newcommand{\GSp}{\operatorname{GSp}}
\newcommand{\mc}{\mathcal}
\newcommand{\mf}{\mathfrak}
\newcommand{\ms}{\mathscr}
\newcommand{\bdd}{\begin{center}\begin{tikzcd}}
\newcommand{\bd}{\begin{tikzcd}}
\newcommand{\edd}{\end{tikzcd}\end{center}}
\newcommand{\ed}{\end{tikzcd}}
\newcommand{\bdp}{\begin{center}\begin{tikzpicture}}
\newcommand{\edp}{\end{tikzpicture}\end{center}}
\newcommand{\bi}{\begin{itemize}}
\newcommand{\ei}{\end{itemize}}
\newcommand{\bt}{\begin{tikzpicture}}
\newcommand{\et}{\end{tikzpicture}}
\newcommand{\ba}{\[\begin{aligned}}
\newcommand{\ea}{\end{aligned}\]}
\newcommand{\bp}{\begin{pmatrix}}
\newcommand{\ep}{\end{pmatrix}}
\newcommand{\bv}{\begin{vmatrix}}
\newcommand{\ev}{\end{vmatrix}}
\newcommand{\bb}{\begin{bmatrix}}
\newcommand{\eb}{\end{bmatrix}}
\newcommand{\bB}{\begin{Bmatrix}}
\newcommand{\eB}{\end{Bmatrix}}
\newcommand{\bea}{\begin{enumerate}[label=(\alph*)]}
\newcommand{\ber}{\begin{enumerate}[label=(\roman*)]}
\newcommand{\ben}{\begin{enumerate}[label=(\arabic*)]}
\newcommand{\ee}{\end{enumerate}}
\title{A Density Theorem for $\Sp(4)$}
\author{Siu Hang Man}
\address{Siu Hang Man, Mathematisches Institut, Universit\"at Bonn, Endenicher Allee 60, 53115 Bonn, Germany}
\email{shman@math.uni-bonn.de}
\thanks{The author is supported in part by DAAD Graduate School Scholarship Programme, and by the German Research Foundation under Germany's Excellence Strategy – EXC-2047/1 – 390685813.}
\begin{document}

\begin{abstract}
Strong bounds are obtained for the number of automorphic forms for the group $\Gamma_0(q) \sbe \Sp(4,\Z)$ violating the Ramanujan conjecture at any given unramified place, which go beyond Sarnak's density hypothesis. The proof is based on a relative trace formula of Kuznetsov type, and best-possible bounds for certain Kloosterman sums for $\Sp(4)$. 
\end{abstract}
\date{\today}
\makeatletter
\@namedef{subjclassname@2020}{%
  \textup{2020} Mathematics Subject Classification}
\makeatother
\subjclass[2020]{11F72, 11L05}
\keywords{exceptional eigenvalues, Kloosterman sums, Kuznetsov formula, Ramanujan conjecture}
\maketitle

\section{Introduction}
In number theory, we often study families of objects, and this is especially the case in the context of automorphic forms, and this methodology has been described formally in \cite{SarnakLetterc, SST2016}. On one hand, this can smooth out irregularities of individual members (which may exist, or whose proof of non-existence is beyond our reach) and allows the use of statistical concepts and deformation techniques to investigate properties within a given family. On the other hand, strong analytic tools, such as various types of trace formulae, are available, to give non-trivial statements to the families as a whole. 

One of the key conjectures in the theory of automorphic forms is the Ramanujan conjecture, which states that cuspidal automorphic representations of the group $\GL(n)$ over a number field $F$ are tempered. While the conjecture is still far out of reach even for the $\GL(2)$ case, one can think of generalisations to automorphic forms of other reductive groups, such as $\Sp(2n)$. It is well-known that the naive generalisation of the Ramanujan conjecture is false for the group $\Sp(4)$, because of the presence of Saito-Kurokawa lifts, which are not tempered. This is not the end of the investigation, however, because it is still an open question whether generic cuspidal automorphic representations of $\Sp(4)$ are tempered. While this is still far out of reach, we can consider approximations to the conjecture as a substitute, and try to bound the number of members in a family violating the conjecture relative to the amount by which they violate the conjecture. This gives a density result, which is analogous to the ones given in \cite{Blomer2019b} for the $\GL(n)$ case. This does not prove the conjecture, but such density results often suffice in applications. 

In this paper we consider the family of generic cuspidal automorphic representations for the group $\Gamma_0(q) \sbe \Sp(4,\Z)$ of matrices whose lower left $2\times 2$ block is divisible by $q$. Fix a place $v$ of $\Q$. For an automorphic representation $\pi = \bigotimes\limits_v \pi_v$, we denote by $\mu_\pi(v) = (\mu_\pi(v,1), \mu_\pi(v,2))$ its local Langlands spectral parameter (see \eqref{eq:L_para_def}), each entry viewed modulo $\frac{2\pi i}{\log p}\Z$ if $v=p$ is a prime. We write 
\begin{align}\label{eq:VioPar}
\sigma_\pi (v) = \max\cb{\vb{\Re\mu_\pi(v,1)}, \vb{\Re\mu_\pi(v,2)}}.
\end{align}
The representation $\pi$ is tempered at $v$ if $\sigma_\pi(v) = 0$, and the size of $\sigma_\pi(v)$ gives a measure on how far $\pi$ is from being tempered at $v$. An example of a non-tempered representation is the trivial representation, which satisfies $\sigma_{\operatorname{triv}} (v) = 3/2$ for all places $v$.

For a finite family $\mc F$ of automorphic representations of $\Sp(4)$ and $\sigma\geq 0$ we define
\ba
N_v(\sigma, \mc F) = \vb{\cbm{\pi\in\mc F}{\sigma_\pi(v) \geq \sigma}}.
\ea
Trivially, we have $N_v(0,\mc F) = \vb{\mc F}$, and if $\mc F$ contains the trivial representation, then we have $N_v(3/2, \mc F) \geq 1$. One may hope to interpolate linearly between the two extreme cases, and obtain a bound of the form
\begin{align}\label{eq:bound_general_form}
N_v(\sigma, \mc F) \ll_{v, \varepsilon} \vb{\mc F}^{1-\frac{\sigma}{a}+\varepsilon}
\end{align}
with $a = 3/2$. In the context of groups $G$ of real rank 1, for the principal congruence subgroup $\Gamma(q) = \cbm{\gamma \in G(\Z)}{\gamma = \id\pmod{q}}$ and $v=\infty$, this is known as Sarnak's density hypothesis \cite[p. 465]{Sarnak1990}.

Density theorems have attracted much attention in the history, and many strong density results are known for various automorphic families on $\GL(2)$ with different settings \cite{Huxley1986, Sarnak1987, Iwaniec1990, BM1998, BM2003, BBR2014}. Via Kuznetsov-type trace formulae on $\GL(3)$, strong density results on $\GL(3)$ were obtained in \cite{Blomer2013, BBR2014, BBM2017}. Blomer \cite{Blomer2019b} further generalised the technique to obtain results in $\GL(n)$ beyond Sarnak's density hypothesis. However, relatively little is known for general reductive groups. Finis-Matz \cite{FM2019} gives as by-products some density results for the family of Maa\ss{} forms of Laplace eigenvalue up to a height $T$ and fixed level. However, the value of $a$ is large, and is at least quadratic in $r$, the rank of the group, so that even the ``convexity bound'' cannot be obtained.

More concretely, in this paper we consider the family $\mc F_I(q)$ of generic cuspidal automorphic representations for the group $\Gamma_0(q)\sbe \Sp(4,\Z)$ for a large prime $q$, and Laplace eigenvalue $\lambda$ in a fixed interval $I$. When the size of $I$ is sufficiently large, we have $\vb{\mc F_I(q)} \asymp_I q^3$. For this family and any place $v\neq q$ of $\Q$, we go beyond the density hypothesis and obtain $a=3/4$, which is halfway between the density hypothesis and the Ramanujan conjecture.

\begin{thm}\label{thm:density_thm}
Let $q$ be a prime, and $v$ a place of $\Q$ different from $q$, $I \sbe [0,\infty)$ a fixed interval, $\varepsilon>0$, and $\sigma\geq 0$. Then
\ba
N_v(\sigma, \mc F_I(q)) \ll_{I,v,n,\varepsilon} q^{3-4\sigma+\varepsilon}.
\ea
\end{thm}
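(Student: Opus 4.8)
The plan is to derive the density bound from a relative trace formula (Kuznetsov formula) for $\Gamma_0(q) \subseteq \Sp(4,\Z)$, combined with positivity and a suitable choice of test function, in the spirit of Blomer's work on $\GL(n)$ \cite{Blomer2019b}. First I would set up the spectral side: take a test function $h$ on the space of spectral parameters that is non-negative on the relevant tempered and complementary-series contours, and which — crucially — is large (bounded below by a positive constant) at every spectral parameter $\mu$ with $\sigma \leq \sigma_\pi(v)$. Here the role of the place $v$ enters: if $v = \infty$ one builds $h$ directly as an archimedean test function localised at large imaginary part $\|\Im\mu\|$ growing like $q^{\text{something}}$; if $v = p$ is a finite place one instead exploits Hecke operators at $p$, building the amplifier out of $T_{p^k}$ so that a representation with parameter $\mu_\pi(p)$ of real part $\geq \sigma$ contributes a Hecke eigenvalue of size $\gg p^{k\sigma}$, and one squares this via the relative trace formula. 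In both cases one obtains an inequality of the shape
\ba
N_v(\sigma, \mc F_I(q)) \ll X^{-2\sigma} \bigl( \text{(spectral side)} \bigr) = X^{-2\sigma}\bigl( \text{(geometric side)} \bigr),
\ea
where $X$ is the amplification length, to be optimised at the end.

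Next I would analyse the geometric side of the Kuznetsov formula for $\Sp(4)$, which decomposes as a main (diagonal/identity) term plus a sum of Kloosterman-sum contributions indexed by the Weyl group elements and by moduli $c$ running over $q\Z$ (or divisors thereof, reflecting the $\Gamma_0(q)$-structure). The diagonal term contributes essentially $|\mc F_I(q)| \asymp q^3$ times the measure of the test function, giving the ``trivial'' part of the bound $q^{3}X^{-2\sigma}$; I would need to check that with $X$ a fixed power of $q$ this already beats the convexity exponent, and then push further using the off-diagonal. The off-diagonal Kloosterman terms must be estimated using the ``best-possible bounds for certain Kloosterman sums for $\Sp(4)$'' asserted in the abstract — i.e. square-root cancellation (Weil-type bounds) in the relevant hyper-Kloosterman sums attached to the long Weyl element and the intermediate ones. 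Summing these bounds against the archimedean weight functions coming from $h$ (Bessel-type integral transforms for $\Sp(4)$, whose support and decay in $c$ control the length of the $c$-sum) should show the off-diagonal is of lower order than $q^3 X^{-2\sigma}$ for $X$ in the admissible range, so that the diagonal dominates.

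Finally I would optimise: choose the amplification length $X$ as large as the support of the Kloosterman transform and the size of $\Gamma_0(q)$ permit — this is where the numerology $a = 3/4$, i.e. the exponent $3 - 4\sigma$, comes out, reflecting that we can take $X \asymp q^{1/2}$ worth of amplification before the geometric error terms overwhelm the main term. The clean statement $q^{3-4\sigma+\varepsilon}$ then follows, uniformly in $v \neq q$ and $\sigma$, with the $\varepsilon$ absorbing divisor-function and $\log$ losses. I expect the main obstacle to be the geometric side: specifically, establishing square-root cancellation for the $\Sp(4)$-Kloosterman sums at prime-power moduli $c \in q\Z$ and controlling the archimedean Bessel transforms (their support in $c$ and their dependence on the spectral localisation) precisely enough that the $c$-sum, after applying those bounds, genuinely saves a power of $q$ over the diagonal; the positivity/amplifier setup on the spectral side and the final optimisation are comparatively routine once the geometric estimates are in hand.
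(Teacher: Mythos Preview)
Your outline has the right skeleton --- Kuznetsov formula, positivity, amplification via Hecke/Fourier data, geometric side analysis --- but there are three genuine gaps between what you propose and what actually makes the argument go through.

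\textbf{Fourier coefficients versus Hecke eigenvalues.} The spectral side of the $\Sp(4)$ Kuznetsov formula carries Fourier coefficients $A_\varpi(M)$, not Hecke eigenvalues. Unlike $\GL(n)$, these are \emph{not} proportional, so you cannot simply ``build the amplifier out of $T_{p^k}$'' and square. The paper devotes an entire section to this: one first proves a recursion expressing $\lambda(p^r,\pi)$ in terms of $A_\varpi(1,p^r)$ and $A_\varpi(1,p^{r-2})$, then deduces (Lemma~\ref{lem:Fourier_bound}) that for some $r$ in a short window $|A_{\varpi_1}(1,p^r)| \gg p^{r\sigma_\pi(p)}$. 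This is the step that lets Fourier coefficients serve as a proxy for Satake parameters, and it is the main new obstacle specific to $\Sp(4)$; your proposal treats it as routine.

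\textbf{Kloosterman sums.} No Weil-type square-root cancellation is used, nor would it obviously suffice. The mechanism is different: the congruence structure of $\Gamma_0(q)$ forces $q\mid c_1$ and $q^2\mid c_2$ for the relevant Weyl elements, and the restriction $mZ\ll q^2$ on the amplification parameter keeps $c_1<q^2$, $c_2<q^4$. Via multiplicativity this reduces the $q$-local part to four explicit sums $\Kl_{q,w}((q^a,q^b),M,N)$; three of them are computed to \emph{vanish identically}, and the fourth (for $w=s_\beta s_\alpha s_\beta$) equals $q^2$ and is handled by a volume estimate on $U_w(\R)$. The prime-to-$q$ parts are bounded trivially by $c_1'c_2'$. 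So the ``best-possible bounds'' in the abstract refer to exact evaluations, not deep cancellation.

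\textbf{Numerology.} Your final optimisation is off. From $N_v(\sigma)\ll X^{-2\sigma}q^3$ one needs $X\asymp q^2$ (not $q^{1/2}$) to reach $q^{3-4\sigma}$. In the paper this is precisely the constraint $mZ\ll q^2$: for finite $v=p$ one takes $m=p^\nu\asymp q^2$, $Z=1$; for $v=\infty$ one takes $m=1$, $Z\asymp q^2$. The admissibility of $X\asymp q^2$ is exactly what the vanishing of the Kloosterman sums above delivers.
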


The proof is based on a careful analysis on the arithmetic side of the Kuznetsov formula, and on the spectral side through a relation of Fourier coefficients of automorphic forms and Hecke eigenvalues. Let $\lambda (m, \pi)$ be the Hecke eigenvalue of $\pi\in\mc F_I(q)$ for the $m$-th standard Hecke operator $T(m)$. It is convenient to adopt the normalisation $\lambda'(m, \pi) := m^{-3/2} \lambda(m,\pi)$. 

\begin{thm}\label{thm:lambdaZ}
Keep the notations as above. Let $m\in\N$ be coprime to $q$ and $Z\geq 1$. Then
\ba
\sum\limits_{\pi\in\mc F_I(q)} \vb{\lambda'(m,\pi)}^2 Z^{2\sigma_\pi(\infty)} \ll_{I,\varepsilon} q^{3+\varepsilon}
\ea
uniformly in $mZ\ll q^2$ for a sufficiently small implied constant depending on $I$.
\end{thm}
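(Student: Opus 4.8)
\textbf{Proof proposal for Theorem \ref{thm:lambdaZ}.}
The plan is to establish this bound via a Kuznetsov-type relative trace formula for $\Sp(4)$ attached to the congruence subgroup $\Gamma_0(q)$, following the strategy pioneered by Blomer for $\GL(n)$ but adapted to the symplectic setting. First I would set up the spectral side: choose a test function on $\Sp(4,\R)$ whose spherical transform is a nonnegative bump concentrated on the archimedean spectral parameters $\mu_\pi(\infty)$ with $\Re$-part in a neighbourhood of the interval $I$ (after the usual Weyl-group symmetrization), and which has the crucial amplification property that its value at a parameter $\mu$ with $\sigma_\pi(\infty) = \sigma$ is $\gg Z^{2\sigma}$ — this is precisely why the factor $Z^{2\sigma_\pi(\infty)}$ appears. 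Combined with the Hecke operator $T(m)$ inserted on the geometric side, the spectral side becomes a sum over $\pi \in \mc F_I(q)$ of $|\lambda'(m,\pi)|^2$ (the square coming from the $T(m) \bar T(m)$ or from pairing Fourier coefficients at two cusps) weighted by $Z^{2\sigma_\pi(\infty)}$ times a positive archimedean factor, so that positivity lets us drop all but the terms we want and bound the left-hand side by the geometric side.

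Next I would expand the geometric side of the Kuznetsov formula. It splits into a \emph{main (identity) term}, coming from the trivial Weyl element, which contributes only when $m = \square$ in the appropriate sense and is of size $O_{I,\varepsilon}(q^{3+\varepsilon})$ after accounting for $\vol(\Gamma_0(q)\bs \Sp(4,\R)) \asymp q^{3}$ (the index $[\Sp(4,\Z):\Gamma_0(q)] \asymp q^3$), plus the archimedean weight; and a sum of \emph{Kloosterman terms}, one for each nontrivial Weyl group element $w$, each of the shape $\sum_{c} \frac{\Kl_w(\dots; c)}{(\text{modulus})} J_w(\dots)$ where $\Kl_w$ is an $\Sp(4)$ Kloosterman sum attached to $w$ and $J_w$ is an archimedean Bessel-type integral transform of the test function. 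The key inputs here are: (i) the support and decay of $J_w$ in terms of $m$, $Z$ and the moduli $c$, which confine the $c$-sum to a range governed by the size $mZ \ll q^2$; (ii) the fact that $q \mid$ (each relevant modulus) since we are on $\Gamma_0(q)$, so the moduli are $\gg q$; and (iii) \textbf{best-possible (square-root cancellation) bounds for the $\Sp(4)$ Kloosterman sums $\Kl_w$}, which is the arithmetic ingredient advertised in the abstract. Feeding these in, each Kloosterman term is bounded by $O_{I,\varepsilon}(q^{3+\varepsilon})$ (in fact by a smaller power of $q$, with room to spare) provided the implied constant in $mZ \ll q^2$ is small enough to keep the $c$-sum away from the ``diagonal''.

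The main obstacle, and the technical heart of the argument, is step (iii) together with the precise analysis of the archimedean transforms $J_w$. For $\Sp(4)$ there are several Weyl elements of intermediate type, and the associated Kloosterman sums are genuinely multidimensional exponential sums whose optimal evaluation (via stratification, $\ell$-adic methods, or explicit Deligne-type bounds) is delicate; one must also handle the degenerate contributions where the Kloosterman sum does not exhibit full cancellation but the corresponding Bessel transform is correspondingly small. Equally delicate is controlling $J_w$ uniformly: one needs its size and oscillation as the spectral parameter ranges over the (possibly non-tempered, hence complex) region selected by the amplified test function, since the weight $Z^{2\sigma}$ must be matched against growth of the Bessel kernel for complex parameters — this is where the balance $mZ \ll q^2$ is exactly calibrated. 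Once all Weyl contributions are shown to be $O_{I,\varepsilon}(q^{3+\varepsilon})$, assembling the pieces and invoking positivity on the spectral side yields the stated bound, uniformly in the range $mZ \ll q^2$.

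\begin{proof}[Proof sketch]
See the discussion above; the full argument occupies the remainder of the paper, combining the relative trace formula of Kuznetsov type developed in the next sections with the Kloosterman sum bounds established therein.
\end{proof}
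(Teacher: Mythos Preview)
Your overall strategy---Kuznetsov formula, amplified archimedean test function to produce the weight $Z^{2\sigma_\pi(\infty)}$, then bound the geometric side Weyl element by Weyl element---matches the paper's. However, two substantive points are missing or different.

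First, and most importantly, you assume that inserting $T(m)$ yields $|\lambda'(m,\pi)|^2$ directly on the spectral side. For $\GL(n)$ this is automatic because Fourier coefficients are Hecke eigenvalues, but for $\Sp(4)$ it is not: the Kuznetsov formula naturally carries Fourier coefficients $A_\varpi(M)$, and these are \emph{not} proportional to Hecke eigenvalues (the paper flags this in the introduction as the main obstacle). The paper therefore proceeds in two steps: it first proves the Fourier-coefficient bound $\int_{(q)} |A_\varpi(1,m)|^2 Z^{2\sigma_\pi(\infty)} \delta_{\lambda_\varpi\in I}\, d\varpi \ll_{I,\varepsilon} q^\varepsilon$ (Proposition~\ref{prp:AZ}), and then separately establishes the recursion $\lambda(p^r,\pi) = p^{3r/2}\bigl(A_{\varpi_1}(1,p^r) - p^{-1}A_{\varpi_1}(1,p^{r-2})\bigr)$ (Theorem~\ref{thm:eigenvalue_Fourier}) to pass from Fourier coefficients back to Hecke eigenvalues. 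The factor $q^{3+\varepsilon}$ thus does not come from the volume in the identity term as you suggest, but from the lower bound $|A_\varpi(1,1)|^2 \gg q^{-3-\varepsilon}$ for an $L^2$-normalised newform (estimate~\eqref{eq:A11_estimate}) used in this conversion. Your outline omits this Fourier--Hecke step entirely.

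Second, your plan to invoke square-root cancellation for the $\Sp(4)$ Kloosterman sums via $\ell$-adic or stratification methods is not what happens. Under the constraint $mZ \ll q^2$, the support conditions \eqref{eq:Kl_nonempty}, \eqref{eq:Kl_q_defined}, \eqref{eq:c_condition} force the $q$-part of the moduli to be one of $(q,q)$, $(q,q^2)$, $(q,q^3)$, and the paper \emph{explicitly evaluates} the four relevant local Kloosterman sums in Section~\ref{section:Kl_eval}: three of them vanish identically, and only $\Kl_{q,s_\beta s_\alpha s_\beta}\bigl((q,q^2),M,N\bigr)=q^2$ survives, handled by an elementary volume bound (Lemma~\ref{lem:3}). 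No Deligne-type input is used; the ``best-possible bounds'' advertised in the abstract are these exact evaluations, not Weil-strength estimates.
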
 

Let us roughly sketch the proof of \Cref{thm:lambdaZ}. We denote by $\cb{\varpi}$ an orthonormal basis of right $K$-invariant automorphic forms for $\Gamma_0(q)$, cuspidal or Eisenstein series, where $K$ is the maximal compact subgroup of $\Sp(4,\R)$. We denote by $\int_{(q)} d\varpi$ the integral over the complete spectrum of $L^2(\Gamma_0(q)\bs \Sp(4,\R)/K)$. Very roughly, the Kuznetsov formula takes the form
\begin{align}
\int_{(q)} \vb{A_\varpi(M)}^2 Z^{2\sigma_\pi(\infty)} \delta_{\lambda_\varpi \in I} d\varpi \; \text{``}\approx\text{''} \; 1 + \sum\limits_{\id\neq w \in W} \sum\limits_{c_1, c_2} \frac{\Kl_{q,w}(c,M,M)}{c_1c_2},
\end{align}
where $M = (1,m) \in \Z^2$, $A_\varpi(M)$ is the $M$-th Fourier coefficient of $\varpi$, defined in \eqref{eq:Fourier_in_Whittaker}, $W$ is the Weyl group of $\Sp(4)$, and $\Kl_{q,w}(c,M,M)$ is a generalised Kloosterman sum, defined in \eqref{eq:Kloosterman_definition} below, associated with the Weyl element $w$, and moduli $c = (c_1,c_2)$. Note that the Kuznetsov formula only extracts the generic spectrum.

However, the situation here is very different from $\GL(n)$ case found in \cite{Blomer2019b}. In the symplectic case, there are no simple relations between the Fourier coefficients $A_\varpi(M)$ of a cuspidal newform $\varpi$ and Hecke eigenvalues $\lambda'(m,\pi)$ of the corresponding automorphic representation (i.e. $\varpi \in V_\pi$). This is in stark contrast with the $\GL(n)$ case, where the Fourier coefficients and Hecke eigenvalues are proportional \cite[Theorem 9.3.11]{Goldfeld2006}. It is because of this obstacle that the Kuznetsov formula is not yet a standard tool for the group $\GSp(4)$, and the present paper seems to be the first application of the Kuznetsov formula that is seen in action for a group other than $\GL(n)$. 

While the Fourier coefficients in principle contain the information on Hecke eigenvalues, it is not obvious how to extract it. A detailed analysis of the relations between them is found in \Cref{section:Hecke_Fourier}. In \Cref{thm:eigenvalue_Fourier} we establish a recursive formula of $\lambda(p^r, \pi)$ in terms of Fourier coefficients. We also outline an algorithm for computing arbitrary Fourier coefficients of a cuspidal form in terms of its Hecke eigenvalues in the appendix. While this is not needed for the proof of the theorems, such results serve an independent interest in number theory, in laying the groundwork for further applications of the Kuznetsov formula on $\Sp(4)$, as well as Fourier analysis of automorphic forms on $\Sp(4)$ in general. 

Using \Cref{thm:eigenvalue_Fourier}, we deduce from \Cref{lem:Fourier_bound} that for a prime $p \nmid q$ and $r\in\N$, the size of Fourier coefficients $A_\varpi(1,p^r)$ of an $L^2$-normalised generic cuspidal form $\varpi$ is often as big as $q^{-3/2-\varepsilon} p^{r\sigma_\pi(p)}$. Through this relation, we are able to use the Kuznetsov formula to derive information on $\sigma_\pi(p)$ from an analysis of the Kloosterman sums. Meanwhile, the factor $Z^{2\sigma_\pi(\infty)}$ deals with the infinite place, so the test function $ \vb{A_\varpi(M)}^2 Z^{2\sigma_\pi(\infty)}$ treats the finite places and the infinite place essentially on the same footing.

When $mZ\ll q$, the Kloosterman sums associated to non-trivial Weyl elements are empty, hence the off-diagonal terms vanish completely. We will use this observation to prove \Cref{thm:alpham} below. To obtain stronger density results, we have to deal with the Kloosterman sums appearing in the off-diagonal term, and improve the trivial bound $\vb{S_{q,w}(c,M,N)} \leq c_1c_2$. Obtaining such bounds for general groups remains a major open problem. 

Finally, we give an application of \Cref{thm:lambdaZ}, for a large sieve inequality analogous to the $\GL(n)$ case \cite{Blomer2019b}. 

\begin{thm}\label{thm:alpham}
Let $q$ be prime and $\cb{\alpha(m)}_{m\in\N}$ any sequence of complex numbers. Then
\ba
\sum\limits_{\pi \in\mc F_I(q)} \Big|\sum\limits_{m\leq x} \alpha(m) \lambda'(m,\pi)\Big|^2 \ll_{I,\varepsilon} q^3 \sum\limits_{m\leq x} \vb{\alpha(m)}^2
\ea
uniformly in $x\ll q$ for a sufficiently small implied constant depending on $I$.
\end{thm}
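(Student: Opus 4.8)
The plan is to deduce this large sieve inequality from the Kuznetsov formula underlying \Cref{thm:lambdaZ}, the decisive point being that in the range $x\ll q$ the arithmetic side carries no off-diagonal contribution at all, exactly as in the regime $mZ\ll q$ discussed before the statement. Opening the square,
\ba
\sum_{\pi\in\mc F_I(q)}\Big|\sum_{m\leq x}\alpha(m)\lambda'(m,\pi)\Big|^2=\sum_{m,n\leq x}\alpha(m)\overline{\alpha(n)}\sum_{\pi\in\mc F_I(q)}\lambda'(m,\pi)\,\overline{\lambda'(n,\pi)}.
\ea
Since $(m,q)=(n,q)=1$ and the representations in $\mc F_I(q)$ have trivial central character (hence are self-dual), the operators $T(m),T(n)$ are self-adjoint and the eigenvalues $\lambda'(m,\pi)$ are real, so the inner sum equals $\sum_\pi\lambda'(m,\pi)\lambda'(n,\pi)$.

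The main step is to recast this inner sum so that the Kuznetsov formula applies. Using the recursion of \Cref{thm:eigenvalue_Fourier} together with Hecke multiplicativity, for any orthonormal basis $\cb{\varpi}$ of right $K$-invariant forms spanning the generic spectrum one expresses $\lambda'(m,\pi)\lambda'(n,\pi)$, for $\varpi\in V_\pi$, as a linear combination -- with coefficients bounded by a divisor-type factor $\ll(mn)^\varepsilon$ -- of the quantities $A_\varpi(M')\overline{A_\varpi(N')}\,|A_\varpi(1,1)|^{-2}$ with lattice points $M',N'$ of size $O(x)$. It is crucial here that one passes through the Fourier coefficients of $\varpi$ directly, rather than through Hecke relations among the $\lambda'$'s: this keeps the indices $M',N'$ of size $O(x)$ and prevents them from inflating to size $O(mn)$. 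Summing over $\pi$, i.e.\ over $\cb{\varpi}$, and inserting the Kuznetsov formula with $Z=1$ as in the proof of \Cref{thm:lambdaZ} -- with the harmonic weights $|A_\varpi(1,1)|^{-2}$ controlled against the natural spectral measure as in \Cref{lem:Fourier_bound} -- each resulting spectral sum equals a diagonal term plus a sum of generalised Kloosterman sums $\Kl_{q,w}(c,M',N')$ over $\id\neq w\in W$, weighted by an integral transform of the test function.

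Now the decisive observation: since $m,n\leq x\ll q$, all lattice points $M',N'$ that occur have size $\ll q$, and for level $\Gamma_0(q)$ every Kloosterman sum $\Kl_{q,w}(c,M',N')$ with $w\neq\id$ is then empty -- the congruence conditions force $c_1,c_2$ divisible by $q$, so $c_1c_2\geq q^2$, which is incompatible with the support of the accompanying transform once the constant implied in $x\ll q$ is chosen small enough (the same mechanism that makes the off-diagonal vanish in the $mZ\ll q$ regime). Hence only the diagonal term survives; tracing it through the Hecke--Fourier dictionary, its contribution to $\sum_\pi\lambda'(m,\pi)\lambda'(n,\pi)$ is $\delta_{m=n}$ times the total spectral mass, which is $\ll_I|\mc F_I(q)|\ll_I q^3$, whereas the remaining pieces of the Hecke recursion carry the $(mn)^\varepsilon$ weight but no main term and are negligible. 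Substituting back and summing the resulting divisor bookkeeping by Cauchy--Schwarz gives
\ba
\sum_{\pi\in\mc F_I(q)}\Big|\sum_{m\leq x}\alpha(m)\lambda'(m,\pi)\Big|^2\ll_{I,\varepsilon}q^3\sum_{m\leq x}\vb{\alpha(m)}^2.
\ea

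The hard part will be the middle step: making the passage between the Hecke eigenvalues $\lambda'(m,\pi)$ and the Fourier coefficients $A_\varpi(M')$ quantitative enough that (i) the Fourier indices genuinely stay of size $O(x)$, so that the Kloosterman vanishing can be invoked in the stated range $x\ll q$, and (ii) the implicit change of basis between $\cb{\lambda'(m,\pi)}_{m\leq x}$ and $\cb{A_\varpi(M')}$ has operator norm $O(x^\varepsilon)$ -- via a Schur-type bound on the combinatorial transition coefficients of \Cref{section:Hecke_Fourier} -- so that no positive power of $x$ is lost and the Kuznetsov diagonal reproduces exactly the $\delta_{m=n}$ main term rather than a thicker correlation matrix. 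Once the combinatorics of \Cref{section:Hecke_Fourier} is organised to this end, everything else (reality of the eigenvalues, emptiness of the Kloosterman sums for $x\ll q$, and the evaluation of the main term) is routine given the machinery already in place.
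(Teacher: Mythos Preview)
Your strategy is the paper's: convert Hecke eigenvalues to Fourier coefficients via \Cref{thm:eigenvalue_Fourier}, apply the Kuznetsov formula with $Z=1$, and use that for $x\ll q$ the divisibility constraints \eqref{eq:Kl_nonempty} force all non-identity Kloosterman sums to be empty, so only the diagonal survives. That is exactly what the paper does, and your identification of the ``hard part'' (the Hecke--Fourier transition with operator norm $O(x^\varepsilon)$) is on the mark.

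Two small corrections and one streamlining remark. First, the control of the harmonic weight $|A_\varpi(1,1)|^{-2}\ll q^{3+\varepsilon}$ comes from \eqref{eq:A11_estimate}, not from \Cref{lem:Fourier_bound} (the latter is a lower bound for $|A_{\varpi_1}(1,p^r)|$ and plays no role here). Second, your description of the congruence obstruction is slightly off: what \eqref{eq:Kl_nonempty} actually gives is $q\mid c_1$ for every $w\ne\id$, and since \Cref{lem:1} yields $c_1\ll (m_1m_2)^{1/2}\le x$, this alone already contradicts $x\ll q$; no appeal to $c_1c_2\ge q^2$ is needed. Third, the paper avoids your ``$\delta_{m=n}$ versus $\delta_{M'=N'}$'' bookkeeping by converting the entire linear form \emph{before} squaring: from \Cref{thm:eigenvalue_Fourier} and multiplicativity one has $\lambda'(m,\pi)=\sum_{d^2\mid m}\mu(d)d^{-1}A_{\varpi_1}(1,m/d^2)$, so $\sum_{m\le x}\alpha(m)\lambda'(m,\pi)=\sum_{k\le x}\tilde\alpha(k)A_{\varpi_1}(1,k)$ with $\sum_k|\tilde\alpha(k)|^2\ll\sum_m|\alpha(m)|^2$ by a Schur bound. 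After inserting \eqref{eq:A11_estimate} and squaring, the Kuznetsov diagonal directly returns $\sum_k|\tilde\alpha(k)|^2$, with no residual divisor combinatorics to clean up.
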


And as a corollary, we establish a bound for the second moment of spinor L-functions on the critical line. Precisely, let $L(s,\pi)$ be the spinor L-function associated to $\pi$, normalised such that its critical strip is $0<\Re s<1$. 

\begin{cor}\label{cor:Lvalue}
For $q$ prime and $t\in\R$, we have
\ba
\sum\limits_{\pi \in\mc F_I(q)} \vb{L(1/2+it, \pi)}^2 \ll_{I,t,\varepsilon} q^{3+\varepsilon}. 
\ea
\end{cor}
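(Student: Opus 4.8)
The plan is to derive the $L$-value bound from the large sieve inequality of \Cref{thm:alpham} via an approximate functional equation. First I would recall that the spinor $L$-function $L(s,\pi)$ is a degree-$4$ Euler product, so its Dirichlet coefficients are (up to the normalisation built into $\lambda'$) essentially the Hecke eigenvalues $\lambda'(m,\pi)$, or at worst simple convolutions thereof; in the normalisation where the critical strip is $0<\Re s<1$ and the analytic conductor of $\pi\in\mc F_I(q)$ is $\asymp_{I} q^{2}$, a standard approximate functional equation expresses
\ba
L(1/2+it,\pi) = \sum\limits_{m} \frac{\lambda'(m,\pi)}{m^{1/2+it}} V\!\left(\frac{m}{q^{2}}\right) + (\text{dual sum}),
\ea
where $V$ is a fixed rapidly-decaying cutoff. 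The dual sum is handled symmetrically (the root number has absolute value $1$ and the gamma-factor ratio is bounded on the critical line), so it suffices to bound the second moment of the first sum.

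Next I would truncate: the tail $m > q^{2+\varepsilon}$ contributes negligibly because $V$ decays faster than any polynomial, and I would further dyadically decompose the range $m \le q^{2+\varepsilon}$ into $O(\log q)$ blocks $m\sim X$ with $X \le q^{2+\varepsilon}$. On each block, \Cref{thm:alpham} applies with $\alpha(m) = m^{-1/2-it} V(m/q^{2}) \mathbf 1_{m\sim X}$ — but note \Cref{thm:alpham} requires $x \ll q$, whereas here $X$ can be as large as $q^{2}$. This is the main obstacle, and the resolution is exactly \Cref{thm:lambdaZ}: taking $Z=1$ there gives $\sum_{\pi\in\mc F_I(q)} |\lambda'(m,\pi)|^{2} \ll_{I,\varepsilon} q^{3+\varepsilon}$ uniformly for $m \ll q^{2}$, i.e. a \emph{single}-coefficient second-moment bound valid up to $m\asymp q^{2}$. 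Combined with a Cauchy--Schwarz over the $\le q^{2+\varepsilon}$ values of $m$ in a block weighted by $|\alpha(m)|$, or more efficiently by noting $\sum_{m\sim X}|\alpha(m)| \ll X^{1/2+\varepsilon}$ and interchanging the sum over $\pi$ with the sum over $m$, one obtains for each block
\ba
\sum\limits_{\pi\in\mc F_I(q)} \Big|\sum\limits_{m\sim X}\alpha(m)\lambda'(m,\pi)\Big|^{2} \ll_{I,\varepsilon} X^{\varepsilon}\Big(\sum\limits_{m\sim X}|\alpha(m)|\Big)^{2} \max_{m\sim X}\sum\limits_{\pi\in\mc F_I(q)}|\lambda'(m,\pi)|^{2} \ll_{I,t,\varepsilon} q^{3+\varepsilon},
\ea
using $\sum_{m\sim X}|\alpha(m)| \ll X^{\varepsilon}$ (since $|\alpha(m)|\le m^{-1/2}$ and $V$ is bounded), which is $\ll q^{\varepsilon}$.

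Summing the $O(\log q)$ dyadic blocks via Cauchy--Schwarz in the block index (costing another $q^{\varepsilon}$), adding the contribution of the dual sum, and absorbing all logarithmic and $t$-dependent factors into $q^{\varepsilon}$ and the implied constant, yields
\ba
\sum\limits_{\pi\in\mc F_I(q)} \vb{L(1/2+it,\pi)}^{2} \ll_{I,t,\varepsilon} q^{3+\varepsilon},
\ea
as claimed. The one technical point to verify carefully is that the Dirichlet coefficients of $L(s,\pi)$ really are controlled by $\lambda'(m,\pi)$ — if the spinor $L$-function's coefficients are sums $\sum_{d}\lambda'(\cdots)$ rather than single eigenvalues, one inserts a divisor-bounded weight and the extra $q^{\varepsilon}$ is harmless — and that the analytic conductor is indeed $\asymp_I q^{2}$ so that the relevant sum length stays within the $m\ll q^{2}$ range where \Cref{thm:lambdaZ} is available; this last compatibility is precisely why the uniformity range in \Cref{thm:lambdaZ} was stated as $mZ\ll q^{2}$.
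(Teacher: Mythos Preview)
There is a genuine gap. Your displayed estimate
\[
\sum_{m\sim X}|\alpha(m)|\ll X^{\varepsilon}
\]
is false: with $|\alpha(m)|\le m^{-1/2}$ you only get $\sum_{m\sim X}m^{-1/2}\asymp X^{1/2}$. Feeding this into your Cauchy--Schwarz/$\ell^1$--$\ell^\infty$ step yields
\[
\sum_{\pi\in\mc F_I(q)}\Big|\sum_{m\sim X}\alpha(m)\lambda'(m,\pi)\Big|^2 \ll X\cdot q^{3+\varepsilon},
\]
which with your claimed sum length $X\le q^{2+\varepsilon}$ gives only $q^{5+\varepsilon}$. The loss is intrinsic to replacing the large sieve by a pointwise second-moment bound: the latter cannot detect the near-orthogonality of the coefficients $\lambda'(m,\pi)$ in $m$, and always costs you a full factor of the sum length.

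The second problem is the conductor. The approximate functional equation for the spinor $L$-function here has length $q^{1/2}$, not $q^{2}$ (equivalently, the arithmetic conductor is $\asymp q$, not $q^{2}$; the square root of the conductor, not the conductor itself, sets the cutoff). With the correct length $q^{1/2}$ the sum stays well inside the range $x\ll q$ where \Cref{thm:alpham} applies, and that large sieve \emph{directly} gives
\[
\sum_{\pi\in\mc F_I(q)}\Big|\sum_{m\sim X}\alpha(m)\lambda'(m,\pi)\Big|^2 \ll_{I,\varepsilon} q^{3}\sum_{m\sim X}|\alpha(m)|^{2}\ll q^{3+\varepsilon},
\]
since $\sum_{m\sim X}m^{-1}\ll 1$. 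So there is no obstacle at all, and no need to fall back on \Cref{thm:lambdaZ}. Your instinct that the uniformity range in \Cref{thm:lambdaZ} was tailored to match the AFE length is in fact backwards: it is the \emph{large sieve} range $x\ll q$ in \Cref{thm:alpham} that matches (and comfortably exceeds) the AFE length $q^{1/2}$.
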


\section*{Acknowledgement}
The author would like to thank Valentin Blomer for his guidance, and Edgar Assing for his helpful explanation on the subject.

\section{Preliminaries}
Let $U \sbe \Sp(4)$ be the standard unipotent subgroup
\ba
U &= \cbm{\bp 1 & x_{12} & x_{13} & x_{14}\\ & 1 & x_{23} & x_{24}\\ &&1\\&&x_{43}&1\ep}{\begin{array}{l} x_{ij} \in \G_a,\\ x_{12} = -x_{43},\\ x_{14} = x_{23}+x_{12}x_{24}\end{array}}.
\ea
Let $V\sbe \Sp(4)$ be the group of diagonal matrices with entries $\pm 1$. Let $W$ be the Weyl group of $\Sp(4)$, which is generated by the matrices
\ba
s_\alpha &= \bp &1\\-1\\&&&1\\&&-1 \ep, & s_\beta &= \bp 1\\ &&&1\\ &&1\\&-1\ep
\ea
as the elements in $\Sp(4)/V$. We denote the long element of the Weyl group by $w_0:= s_\alpha s_\beta s_\alpha s_\beta$. For $w\in W$, we define $U_w = w^{-1}U^\top w \cap U$. For $N \in \R^2$ we define a character $\theta_N: U(\R) \to S^1$ by
\begin{align}\label{eq:character_def}
\theta_N (x) = e\rb{N_1 x_{12} + N_2 x_{24}}.
\end{align}
Note that if $N \in \Z^2$, this defines a character $U(\R)/U(\Z) \to S^1$. If $N = (1,1)$, we drop it from the notation of the character.  

Let $T\sbe \Sp(4)$ be the diagonal torus. The standard minimal parabolic subgroup is given by $P_0 = TU$. We embed $y = (y_1, y_2) \in \R_+^2$ into $T(\R)$ via the map $\iota(y) = (y_1y_2^{1/2}, y_2^{1/2}, 1/y_1y_2^{1/2}, 1/y_2^{1/2})$. We denote the image of $\R_+^2$ in $T(\R)$ by $T(\R_+)$. An element $g\in \Sp(4,\R)$ admits Iwasawa decomposition $g = xyk$, with $x = U(\R)$, $y \in T(\R_+)$ and $k \in K$, where $K = \SO(4,\R) \cap \Sp(4,\R)$ is the maximal compact subgroup of $\Sp(4,\R)$. We denote by $\y(g) = \iota^{-1}(y)$ the Iwasawa $y$-coordinates of $g$. For $w\in W$, $y\in\R_+^2$ we write $\tensor[^w]{y}{} = \y(w \iota(y)^{-1} w^{-1})$. 

For $\alpha \in \C^2$, $y\in \R_+^2$, we write $y^\alpha = y_1^{\alpha_1} y_2^{\alpha_2}$. Let $\eta = (2, 3/2)$. We define measures
\ba
dx &= dx_{12} dx_{13} dx_{23} dx_{24}, &  d^*y = y^{-2\eta} \frac{dy_1}{y_1} \frac{dy_2}{y_2}
\ea
on $U(\R)$ and $\R_+^2$ respectively. We denote the pushforward of $d^*y$ to $T(\R_+)$ by $\iota$ also by $d^*y$. Then $dx$ is the Haar measure on $U(\R)$, and $dx d^*y$ is a left $\Sp(4,\R)$-invariant measure on $\Sp(4,\R)/K$. 

We define another embedding of $\R_+^2$ into $T(\R_+)$ by
\ba
c = (c_1, c_2) \mapsto c^* = \diag (1/c_1, c_1/c_2, c_1, c_2/c_1).
\ea
A simple calculation shows that $\y(c^*)^\eta = (c_1c_2)^{-1}$.

Let $\pi = \bigotimes \pi_v$ be a globally generic irreducible spherical representation of $\GSp(4)$ with trivial central character. Using notations in \cite{RS2007}, $\pi_v$ is induced from the character $\chi_1 \times \chi_2 \rtimes \sigma$, given by
\ba
\diag\rb{t_1,t_2,t_1^{-1}v, t_2^{-1}v} \mapsto \chi_1(t_1) \chi_2(t_2) \sigma(v). 
\ea
As $\pi_v$ is right $K_v$-invariant, we may assume that $\chi_1, \chi_2, \sigma$ are unramified, and we may write $\chi_1 = \vb{\;\cdot\;}^{\alpha_1}$, $\chi_2 = \vb{\;\cdot\;}^{\alpha_2}$ and $\sigma =  \vb{\;\cdot\;}^{\beta}$. As $\pi_v$ has trivial central character, we have $\alpha_1+\alpha_2+2\beta = 0$. So the L-parameter is given by
\ba
\rb{\chi_1\chi_2\sigma, \chi_1\sigma, \chi_2\sigma, \sigma} = \rb{\frac{\alpha_1+\alpha_2}{2}, \frac{\alpha_1-\alpha_2}{2}, \frac{-\alpha_1+\alpha_2}{2}, \frac{-\alpha_1-\alpha_2}{2}}.
\ea
We then take 
\begin{align}\label{eq:L_para_def}
\mu_\pi(v) = \rb{\frac{\alpha_1+\alpha_2}{2}, \frac{\alpha_1-\alpha_2}{2}},
\end{align}
so the L-parameter becomes $\rb{\mu_\pi(v,1), \mu_\pi(v,2), -\mu_\pi(v,1), -\mu_\pi(v,2)}$. When $\pi_v$ is lifted to a self-dual representation of $\GL(4)$, this is precisely the natural Langlands parameter of the lift.

\section{Auxiliary results}

Since the Iwasawa decomposition $\Sp(4,\R) = U(\R)T(\R_+)K$ is actually the Gram-Schmidt orthogonalisation of rows, we can compute $\y(g)$ explicitly. Let $\Delta_1$ be the norm of the third row of $g$, and $\Delta_2$ be the area of the parallelogram spanned by the bottom two rows of $g$. Then we have
\ba
g \equiv \bp 1/\Delta_1 & * & * & *\\ & \Delta_1/\Delta_2 & * & *\\ &&\Delta_1\\&& * & \Delta_2/\Delta_1\ep \pmod{K}. 
\ea
In particular, we have $\y(g) = (\Delta_2/\Delta_1^2, \Delta_1^2/\Delta_2^2)$. Conversely, if $\y(g) = (Y_1, Y_2)$, then $\Delta_1(g) = Y_1^{-1}Y_2^{-1/2}$ and $\Delta_2(g) = Y_1^{-1}Y_2^{-1}$. 

\begin{lem}\label{lem:1}
Let $w\in W$, $x\in U_w(\R)$, and $y, c, B \in \R_+^2$. Write $\y(\iota(B) c^*wx\iota(y)) = Y \in \R_+^2$ and $A = \iota(B)c^*$. Then we have
\ba
c_1 &\ll_{y,Y} B_1 B_2^{1/2}, & c_2 &\ll_{y, Y} B_1 B_2,
\ea
and
\ba
1\leq \Delta_1(wx) &\ll_{y,Y} \y(A)_1\y(A)_2^{1/2}, & 1\leq \Delta_2(wx) &\ll_{y,Y} \y(A)_1\y(A)_2. 
\ea
\end{lem}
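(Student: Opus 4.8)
The plan is to write $g := \iota(B)c^*wx\iota(y) = A\cdot(wx)\cdot\iota(y)$ with $A := \iota(B)c^*$ a diagonal matrix, and to follow $\Delta_1$ and $\Delta_2$ through each of the three factors. First I would compute $A$ from the explicit torus embeddings: $A = \diag(d_1,d_2,d_3,d_4)$ with $d_3 = c_1 B_1^{-1}B_2^{-1/2}$ and $d_4 = c_2 c_1^{-1}B_2^{-1/2}$. Since $A$ is diagonal we have $\Delta_1(A) = d_3$ and $\Delta_2(A) = d_3d_4$, hence $\y(A) = (d_4/d_3,\, d_4^{-2})$ and
\[
\y(A)_1\y(A)_2^{1/2} = d_3^{-1}, \qquad \y(A)_1\y(A)_2 = (d_3d_4)^{-1}.
\]
Recalling that $\Delta_1(g) = Y_1^{-1}Y_2^{-1/2}$ and $\Delta_2(g) = Y_1^{-1}Y_2^{-1}$ are determined by the given $Y$, all four asserted bounds will follow at once from the four relations
\[
\Delta_1(g)\asymp_y d_3\,\Delta_1(wx), \qquad \Delta_2(g)\asymp_y d_3d_4\,\Delta_2(wx), \qquad \Delta_1(wx)\ge1, \qquad \Delta_2(wx)\ge1.
\]
Indeed the first two then give $d_3\ll_{y,Y}1$ and $d_3d_4\ll_{y,Y}1$ (using the lower bounds), as well as $\Delta_1(wx)\ll_{y,Y}d_3^{-1} = \y(A)_1\y(A)_2^{1/2}$ and $\Delta_2(wx)\ll_{y,Y}(d_3d_4)^{-1} = \y(A)_1\y(A)_2$; and $d_3\ll_{y,Y}1$, $d_3d_4\ll_{y,Y}1$ unwind through the formulas for $d_3,d_4$ to $c_1\ll_{y,Y}B_1B_2^{1/2}$ and $c_2\ll_{y,Y}B_1B_2$.

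For the two transformation relations I would use that $\Delta_1(h)$ is the Euclidean norm of the third row of $h$, and --- by Lagrange's identity --- that $\Delta_2(h)^2 = \sum_{i<j}(h_{3i}h_{4j}-h_{3j}h_{4i})^2$ is the squared $\ell^2$-norm of the vector of $2\times2$ minors formed from the last two rows of $h$. Left multiplication by the positive diagonal matrix $A$ scales the third row by $d_3$ and each such minor by $d_3d_4$, so $\Delta_1(Ah) = d_3\Delta_1(h)$ and $\Delta_2(Ah) = d_3d_4\Delta_2(h)$ exactly. Right multiplication by $\iota(y)$ multiplies the $j$-th coordinate of every row by the fixed positive number $\iota(y)_{jj}$, hence scales each $2\times2$ minor by $\iota(y)_{ii}\iota(y)_{jj}\asymp_y1$; so $\Delta_1(h\iota(y))\asymp_y\Delta_1(h)$ and $\Delta_2(h\iota(y))\asymp_y\Delta_2(h)$. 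Applying both with $h = wx$ yields the first two relations. Note that only $x\in U$ is used here, not the finer condition $x\in U_w$.

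For the lower bounds: since $w$ is monomial with entries $\pm1$, the third and fourth rows of $wx$ are $\pm$ two distinct rows $x_{p,\cdot}$ and $x_{p',\cdot}$ of the upper-unitriangular matrix $x$. Each row of $x$ has its diagonal entry equal to $1$, so $\Delta_1(wx) = \|x_{p,\cdot}\|\ge1$. Moreover the $2\times2$ minor of the last two rows of $wx$ taken from the columns $\{p,p'\}$ equals, up to a sign, the determinant of the $2\times2$ principal submatrix of $x$ on the index set $\{p,p'\}$; this submatrix is triangular with unit diagonal, so that minor is $\pm1$, whence $\Delta_2(wx)\ge1$. Together with the previous paragraph this proves the lemma.

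The only step that needs any care is the behaviour of the two-dimensional volume $\Delta_2$ under the three factors --- in particular the fact that right multiplication by $\iota(y)$ distorts it merely by a bounded factor, not by a genuine Jacobian determinant --- and this is precisely why I would carry it through the minor expansion $\Delta_2(h)^2 = \sum_{i<j}(h_{3i}h_{4j}-h_{3j}h_{4i})^2$ rather than argue geometrically. Everything else is bookkeeping with the embeddings $\iota$ and $c\mapsto c^*$ and the recorded identity $\y(h) = (\Delta_2/\Delta_1^2,\,\Delta_1^2/\Delta_2^2)$.
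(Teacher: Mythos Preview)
Your proof is correct and follows essentially the same route as the paper's: track $\Delta_1$ and $\Delta_2$ through left multiplication by the diagonal $A$ (exact scaling by $d_3$ and $d_3d_4$) and right multiplication by $\iota(y)$ (bounded distortion depending only on $y$), then invoke $\Delta_i(wx)\ge 1$ to extract both the bounds on $c_1,c_2$ and on $\Delta_i(wx)$. One small slip: in this paper's convention $x\in U$ is \emph{not} upper unitriangular (there is a lower entry $x_{43}=-x_{12}$), but your argument survives since every principal $2\times 2$ minor of $x$ is still equal to $1$, so the claimed minor $\pm 1$ in the last two rows of $wx$ is genuine and $\Delta_2(wx)\ge 1$ holds as stated.
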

\begin{proof}
Note that $\Delta_i (wx) \geq 1$ as one of its minors is always 1. For the first statement, we compute
\ba
\Delta_1(\iota(B) c^*wx\iota(y)) &= \frac{c_1}{B_1B_2^{1/2}} \Delta_1(wx\iota(y)), & \Delta_2(\iota(B) c^*wx\iota(y)) &= \frac{c_2}{B_1B_2} \Delta_2(wx\iota(y)).
\ea
Then we obtain
\ba
c_1\leq c_1\Delta_1(wx) \ll_y c_1\Delta_1(wx\iota(y)) &= \Delta_1(\iota(B)c^*wx\iota(y)) B_1B_2^{1/2} \ll_Y B_1B_2^{1/2},\\
c_2\leq c_2\Delta_2(wx) \ll_y c_2\Delta_2(wx\iota(y)) &= \Delta_2(\iota(B)c^*wx\iota(y)) B_1B_2 \ll_Y B_1B_2.
\ea
For the second statement, we observe that
\ba
c_1^{-1} &= \y(c^*)_1 \y(c^*)_2^{1/2}, & c_2^{-1} &= \y(c^*)_1 \y(c^*)_2.
\ea
Hence
\ba
\Delta_1(wx) \ll_{y,Y} B_1B_2^{1/2} c_1^{-1} &= (B\y(c^*))_1(B\y(c^*))_2^{1/2} = \y(A)_1\y(A)_2^{1/2},\\
\Delta_2(wx) \ll_{y,Y} B_1B_2 c_2^{-1} &= (B\y(c^*))_1 (B\y(c^*))_2 = \y(A)_1\y(A)_2,
\ea
finishing the proof.
\end{proof}

\begin{lem}\label{lem:2}
Let $N \in \N^2$ and $w\in W$. For $x\in U_w(\R)$, define $x' = \iota(N) x \iota(N)^{-1}$. Then
\ba
\frac{dx'}{dx} = (\tensor[^w]{N}{})^\eta N^\eta.
\ea
\end{lem}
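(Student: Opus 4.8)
The statement concerns the Jacobian of the conjugation map $x \mapsto x' = \iota(N) x \iota(N)^{-1}$ restricted to the subgroup $U_w(\R) = w^{-1}U^\top w \cap U$, with respect to the measure $dx$ (which is a product of the linear coordinates $dx_{ij}$ appearing in the entries of $U$). The plan is to diagonalize the adjoint action and read off the Jacobian as a product of the relevant root characters.

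\emph{Step 1: Reduce to a root-space computation.} Conjugation by the diagonal torus element $\iota(N)$ acts on each root subgroup $\G_a$ inside $U$ by scaling by $\iota(N)^{\gamma}$, where $\gamma$ is the corresponding positive root of $\Sp(4)$. The subgroup $U_w$ is, as a variety, a product of those root subgroups $\G_a$ indexed by the positive roots $\gamma$ with $w^{-1}\gamma < 0$ (equivalently, the roots whose root subgroups lie in $w^{-1}U^\top w$). So the map $x \mapsto x'$ is, in the natural coordinates on $U_w$, just multiplication of the coordinate attached to $\gamma$ by the scalar $\iota(N)^\gamma$, up to a subtlety about which coordinate parametrizes $U_w$ — the coordinates in the given parametrization of $U$ are $x_{12}, x_{13}, x_{23}, x_{24}$ (with $x_{43} = -x_{12}$ and $x_{14}$ determined), and $U_w$ is cut out inside this by linear/polynomial conditions. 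I would therefore first fix, for each $w \in W$, an explicit linear coordinate system on $U_w(\R)$ adapted to $dx$, i.e. identify which subset of $\{x_{12}, x_{13}, x_{23}, x_{24}\}$ (or which independent linear combinations) freely parametrizes $U_w$, and note that $dx$ restricted to $U_w$ is the product of those coordinate differentials.

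\emph{Step 2: Identify the scaling factors with $(\tensor[^w]{N}{})^\eta N^\eta$.} The Jacobian is then $\prod_{\gamma} \iota(N)^\gamma$, the product over the roots $\gamma$ occurring in $U_w$. The content of the lemma is the identity $\prod_{\gamma \in U_w} \iota(N)^\gamma = (\tensor[^w]{N}{})^\eta N^\eta$. Here $\eta = (2, 3/2)$ is (half) the sum of the positive roots — that is, $\y(\cdot)^{2\eta}$ is the modular character, so $u^\eta$ for $u \in T(\R_+)$ is $\iota(u)^\rho$ where $\rho$ is the half-sum of positive roots expressed in the $\y$-coordinates. The identity to prove is thus the statement that the sum of roots appearing in $U_w$ equals $\rho + w^{-1}\rho$ (pulled through the $\y$-parametrization and the definition $\tensor[^w]{N}{} = \y(w\iota(N)^{-1}w^{-1})$). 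This is a standard fact: $\sum_{\gamma > 0,\ w^{-1}\gamma < 0} \gamma = \rho - w^{-1}\rho$ when one is careful with signs, but because $U_w = w^{-1}U^\top w \cap U$ picks out roots $\gamma > 0$ with $w\gamma < 0$ (or $w^{-1}\gamma<0$, depending on convention), and because conjugating $\rho$ by $w$ flips exactly those, the product of the scalings telescopes to $\iota(N)^{\rho + w\rho}$ or the stated $(\tensor[^w]{N}{})^\eta N^\eta$. I would verify the precise form by matching conventions: compute $\tensor[^w]{N}{}$ via $\y(w\iota(N)^{-1}w^{-1})$ and check that $(\tensor[^w]{N}{})^\eta = \iota(N)^{-w^{-1}\rho}$ or the appropriate sign, so that the product $N^\eta (\tensor[^w]{N}{})^\eta$ equals $\prod_{\gamma \in U_w}\iota(N)^\gamma$.

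\emph{Step 3: Finish, possibly by checking generators.} Since $W$ is generated by $s_\alpha, s_\beta$, and both sides of the claimed identity are ``multiplicative'' in a suitable sense under the Bruhat-order factorization $U_{w_1 w_2}$, one clean route is to verify the formula directly for $w \in \{e, s_\alpha, s_\beta\}$ by explicit matrix computation — writing out $U_w(\R)$, the map $x \mapsto \iota(N) x \iota(N)^{-1}$ in coordinates, reading off the Jacobian, and separately computing $\tensor[^w]{N}{}$ from its definition and $N^\eta (\tensor[^w]{N}{})^\eta$ — and then bootstrap to general $w$ using a length-additivity/cocycle property $\tensor[^{w_1 w_2}]{N}{} $ relates to $\tensor[^{w_1}]{(\tensor[^{w_2}]{N}{})}{}$ together with the decomposition of $U_{w_1 w_2}$. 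The main obstacle is bookkeeping: getting the half-integer powers right (the $\y$-coordinates are square roots of torus entries, so $\eta = (2,3/2)$ rather than an integer vector), keeping track of which roots lie in $U_w$ versus $U^\top \cap w U w^{-1}$, and making sure the parametrization of $U_w$ by linear coordinates is compatible with the product measure $dx = dx_{12}\,dx_{13}\,dx_{23}\,dx_{24}$ — there is a potential Jacobian coming from the constraint $x_{14} = x_{23} + x_{12}x_{24}$ that must be checked to be harmless on each $U_w$. Once conventions are pinned down, the algebra is routine; I expect the cleanest writeup is the generator check in Step 3 combined with the abstract root-sum identity in Step 2 as a sanity check.
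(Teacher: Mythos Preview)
Your proposal is correct, and it takes a genuinely different route from the paper. The paper's proof is literally the sentence ``By direct computation'': with only eight Weyl elements (and only four for which $\Kl_{q,w}$ is ever nonzero), one simply writes down $U_w(\R)$ in the coordinates $x_{12},x_{13},x_{23},x_{24}$, conjugates by $\iota(N)=\diag(N_1N_2^{1/2},N_2^{1/2},N_1^{-1}N_2^{-1/2},N_2^{-1/2})$, reads off the scaling on each surviving coordinate, and compares with $(\tensor[^w]{N}{})^\eta N^\eta$ computed from the definition $\tensor[^w]{N}{}=\y(w\iota(N)^{-1}w^{-1})$.

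Your approach packages the same computation conceptually: the roots in $U_w$ are precisely $\{\gamma>0: w\gamma<0\}$, whose sum is $\rho-w^{-1}\rho$; since $N^\eta=\iota(N)^\rho$ and $(\tensor[^w]{N}{})^\eta=\iota(N)^{-w^{-1}\rho}$, the Jacobian $\prod_\gamma \iota(N)^\gamma=\iota(N)^{\rho-w^{-1}\rho}$ matches. This is cleaner and immediately generalises to $\Sp(2n)$ or any split group, whereas the paper's case check is faster here and sidesteps all the sign/convention bookkeeping you flag in Step~2. Your worry about the constraint $x_{14}=x_{23}+x_{12}x_{24}$ is harmless: the four coordinates $x_{12},x_{13},x_{23},x_{24}$ correspond exactly to the four positive root spaces $e_1-e_2,\,2e_1,\,e_1+e_2,\,2e_2$, so restricting to $U_w$ just sets a subset of them to zero and the induced measure is the product over the survivors. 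You do not need the generator-and-bootstrap argument of Step~3 once the root-sum identity of Step~2 is in hand; that step is redundant.
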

\begin{proof}
By direct computation.
\end{proof}

\begin{lem}\label{lem:3}
Let $B \in \R_+^2$, and $w = s_\beta s_\alpha s_\beta$. Then
\ba
\vol\cbm{x\in U_{w}(\R)}{\Delta_j (wx) \leq B_j, j=1,2} \ll (B_1B_2)^{1+\varepsilon}
\ea
for any $\varepsilon>0$. 
\end{lem}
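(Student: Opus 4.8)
The plan is to make the region in the statement completely explicit and then estimate its volume by a one-dimensional integral carrying a crucial decay factor. \emph{First}, I would determine $U_w(\R)$ explicitly. Writing $w=s_\beta s_\alpha s_\beta$ as a matrix (equivalently, noting that $w$ acts on the torus by $\chi_1\mapsto-\chi_2$, $\chi_2\mapsto-\chi_1$), one finds that, up to the sign torus $V$ — which affects neither $U_w$ nor the $\Delta_i$ — $w$ is the antidiagonal element with $e_1\mapsto e_4$, $e_2\mapsto -e_3$, $e_3\mapsto -e_2$, $e_4\mapsto e_1$. Conjugating a general element of $U$ by $w$ and requiring the result to be lower triangular forces $x_{12}=-x_{43}=0$, so that
\[
U_w(\R)=\cbm{\bp 1 & 0 & c & b\\ & 1 & b & d\\ &&1\\&&&1\ep}{b,c,d\in\R},
\]
with $x_{13}=c$, $x_{23}=x_{14}=b$, $x_{24}=d$, and the Haar measure $dx$ restricting to $db\,dc\,dd$.

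\emph{Next}, I would compute the two invariants along $wx$. Multiplying out, $wx$ has third row $(0,-1,-b,-d)$ and fourth row $(1,0,c,b)$, whence $\Delta_1(wx)^2=1+b^2+d^2$, and by the Gram identity $\|u\wedge v\|^2=\|u\|^2\|v\|^2-\langle u,v\rangle^2$,
\[
\Delta_2(wx)^2=(1+b^2+d^2)(1+b^2+c^2)-b^2(c+d)^2=1+2b^2+c^2+d^2+(b^2-cd)^2.
\]
In particular $\Delta_2(wx)\ge\Delta_1(wx)\ge 1$, so the set in the lemma is empty (and the volume is $0$) unless $B_1,B_2\ge 1$, which I assume henceforth.

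\emph{Then} I would slice in the variable $c$. The conditions $\Delta_1(wx)\le B_1$ and $\Delta_2(wx)\le B_2$ give $b^2+d^2\le B_1^2$ and, dropping the nonnegative term $2b^2+d^2$, $c^2+(cd-b^2)^2\le B_2^2$. For fixed $(b,d)$ the left side is an upward-opening quadratic in $c$ with leading coefficient $1+d^2$; a discriminant computation shows it is $\le B_2^2$ on a $c$-interval of length at most $2B_2/\sqrt{1+d^2}$ (and of length $0$ when there are no solutions). Hence, integrating in $c$, then in $b$ over an interval of length $\le 2B_1$, then in $d$ over $[-B_1,B_1]$,
\[
\vol\cbm{x\in U_w(\R)}{\Delta_j(wx)\le B_j,\ j=1,2}\le\int_{b^2+d^2\le B_1^2}\frac{2B_2}{\sqrt{1+d^2}}\,db\,dd\le 4B_1B_2\int_{-B_1}^{B_1}\frac{dd}{\sqrt{1+d^2}}\ll B_1B_2\log(2+B_1),
\]
which is $\ll_\varepsilon(B_1B_2)^{1+\varepsilon}$ since $B_1,B_2\ge 1$.

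\emph{The one point that must not be missed} lies in this last step: there is no deep obstacle, but the naive estimate — $c$ ranging over an interval of length $\asymp B_2$ and $(b,d)$ over a disc of area $\asymp B_1^2$ — only yields $B_1^2B_2$, too large by a factor $\asymp B_1$. The entire saving comes from the decay $1/\sqrt{1+d^2}$ in the length of the admissible $c$-interval; integrating this over the disc $b^2+d^2\le B_1^2$ is what collapses the $B_1^2$ down to $B_1^{1+\varepsilon}$, with the logarithm as the only source of the $\varepsilon$-loss. The only genuinely error-prone part of the argument is the explicit algebra of the first two steps — correctly identifying which coordinate of $U$ survives in $U_w$, and pinning down the exact shape of $\Delta_2(wx)^2$.
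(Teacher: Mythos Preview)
Your proof is correct. Both your argument and the paper's make the region explicit and integrate by slicing, but the slicing is different. The paper fixes $x_{23}$ and uses only the single $2\times 2$ minor bound $|x_{13}x_{24}-x_{23}^2|\le B_2$ together with a crude box $|x_{13}|,|x_{24}|\le 1+\max\{B_1,B_2\}$, then invokes the classical hyperbola estimate $\vol\{(x,y)\in[-b,b]^2:\ xy\in I\}\ll |I|\log b$ to control the $(x_{13},x_{24})$-area for each fixed $x_{23}$. You instead retain the full expression $\Delta_2(wx)^2=1+2b^2+c^2+d^2+(b^2-cd)^2$, fix $(b,d)$ on the $\Delta_1$-disc, and bound the admissible $c$-interval via the discriminant of the resulting quadratic; your decay factor $1/\sqrt{1+d^2}$ plays the role of the $1/|y|$ in the paper's hyperbola integral, and each produces the same logarithmic loss. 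Your route is marginally cleaner in that it never needs a separate a priori bound on $x_{13}$, while the paper's is more recognisable as a stock lattice-point/hyperbola trick; the substance is the same.
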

\begin{proof} 
We can assume without loss of generality that $B_j \geq 1$, otherwise we deduce from \Cref{lem:1} that the volume is 0. We have
\ba
x = \bp 1 && x_{13} & x_{23}\\ &1 & x_{23} & x_{24}\\ &&1\\ &&&1 \ep \in U_{w}(\R) \rsa wx = \bp &&&-1\\ &&1\\ &1 & x_{23} & x_{24}\\ -1 && -x_{13} & -x_{23}\ep. 
\ea
Then we obtain bounds 
\ba
\vb{x_{23}}&\leq B_1, \vb{x_{13}x_{24}-x_{23}^2} \leq B_2.
\ea
We also have $\vb{x_{13}}, \vb{x_{24}} \leq b :=  1 + \max\cb{B_1, B_2}$. If $I\sbe \R$ is any interval of length $\vb{I}\geq 1$, then
\ba
\vol\cbm{(x,y) \in [-b,b]^2}{xy\in I} \leq \int_{|y|\leq b} \min\cb{\frac{\vb{I}}{\vb{y}}, 2b} dy \leq 4\vb{I}(1+\log b).
\ea
Hence, if $\vb{x_{23}}\leq B_1$ is fixed, the volume of $(x_{13}, x_{24})$ is $O(B_2 \log b)$. This establishes the bound.
\end{proof}

\section{$\Sp(4)$ Kloosterman sums}
Properties of Kloosterman sums for $\Sp(4,\Z)$ were given in \cite{Man2020b}. They generalise in a natural way to the congruence subgroup $\Gamma_0(q)$. The Bruhat decomposition gives $\Sp(4,\Q) = \coprod\limits_{w\in W} G_w(\Q)$ with $G_w = UwTU_w$ as a disjoint union. Let $M, N, c\in \N^2$, $w\in W$. Then if
\begin{align}\label{eq:Kloosterman_well_defined}
\theta_M(wc^*x (c^*)^{-1} w^{-1}) = \theta_N(x)
\end{align}
for all $x\in U \cap w^{-1}U w$, the Kloosterman sum
\begin{align}\label{eq:Kloosterman_definition}
\Kl_{q,w}(c,M,N) := \sum\limits_{xwc^*x' \in U(\Z) \bs G_w(\Q) \cap \Gamma_0(q) / U_w(\Z)} \theta_M(x) \theta_N(x')
\end{align}
is well-defined \cite[Proposition 5.1]{Man2020b}. If \eqref{eq:Kloosterman_well_defined} does not hold, we set $\Kl_{q,w}(c,M,N) = 0$. The Kloosterman sum $\Kl_{q,w}(c,M,N)$ is nonzero only if $w = \id, s_\alpha s_\beta s_\alpha, s_\beta s_\alpha s_\beta, w_0$ \cite{Man2020b}. 

Now suppose the entries of $M = (M_1, M_2)$ and $N = (N_1, N_2)$ are coprime to $q$. Considering the Bruhat decomposition of $\Gamma_0(q)$, we deduce that the Kloosterman sum $\Kl_{q,w}(c,M,N)$ is nonempty only if 
\begin{align}\label{eq:Kl_nonempty}
q\mid c_1 &\text{ for } w = s_\alpha s_\beta s_\alpha, & q\mid c_1 \text{ and } q^2\mid c_2 &\text{ for } w = s_\beta s_\alpha s_\beta, w_0.
\end{align}
Meanwhile, the well-definedness condition \eqref{eq:Kloosterman_well_defined} says that the Kloosterman sums are well-defined precisely if
\ba
N_2 = M_2 \frac{c_1^2}{c_2^2} \text{ if } w &= s_\alpha s_\beta s_\alpha, & N_1 = M_1 \frac{c_2}{c_1^2} \text{ if } w&=s_\beta s_\alpha s_\beta.
\ea
Hence the Kloosterman sums are well-defined only if
\begin{align}\label{eq:Kl_q_defined}
v_q(c_1) = v_q(c_2) \text{ if } w &= s_\alpha s_\beta s_\alpha, & v_q(c_2) = 2v_q(c_1) \text{ if } w &= s_\beta s_\alpha s_\beta.
\end{align}

From the abstract definition \cite{Stevens1987, Man2020b}, the Kloosterman sums $\Kl_{q,w}(c,M,N)$ also enjoy certain multiplicativity in the moduli. We state one particular case. Let $q$ be prime. For $c = (c_1, c_2)\in\N^2$, let $c'_j = q^{-v_q(c_j)} c_j$, $j=1,2$, and $c' = (c'_1, c'_2)$. Then we have
\begin{align}\label{eq:Kl_mult}
\Kl_{q,w}(c, M, N) = \Kl_{q,w}\rb{(q^{v_q(c_1)}, q^{v_q(c_2)}}, M', N') \Kl_{1,w} (c', M'', N'')
\end{align}
for some $M', N', M'', N'' \in \N^2$. Moreover, if the entries of $M$, $N$ are coprime to $q$, then so are $M'$, $N'$. From \cite{DR1998}, we have a trivial bound
\begin{align}\label{eq:Kl_cprime_bound}
\Kl_{1,w} (c', M'', N'') \leq \vb{U(\Z) \bs G_w(\Q) \cap \Sp(4,\Z) / U_w(\Z)} \leq c'_1 c'_2.
\end{align}

\subsection{Evaluations of Kloosterman sums}\label{section:Kl_eval}

For the proof of the theorems in \Cref{section:proofs}, we compute the following Kloosterman sums:
\ba
&\Kl_{q,s_\alpha s_\beta s_\alpha}\rb{(q,q),M,N}, & &\Kl_{q,s_\beta s_\alpha s_\beta}\rb{(q,q^2),M,N},\\
&\Kl_{q,w_0}\rb{(q,q^2),M,N}, & &\Kl_{q,w_0}\rb{(q,q^3),M,N}.
\ea
\ber
\item 
Consider the Bruhat decomposition for summands in $\Kl_{q, s_\alpha s_\beta s_\alpha} \rb{(q,q), M, N}$:
{\small
\ba
\gamma = &\bp 1 & \beta_1 & \beta_2 & \beta_3\\ & 1 & \beta_4 & \beta_5\\ &&1\\&&-\beta_1 & 1\ep \bp &&-q^{-1}\\&1\\q\\&&&1\ep \bp 1 & v_2/q & v_3/q & v_4/q\\ & 1 & v_4/q \\ &&1\\&&-v_2/q & 1\ep\\
= & \bp \beta_2 q & \beta_2 v_2 + \beta_1 & \beta_2 v_3 - \beta_3 v_2/q + \beta_1 v_4 /q - 1/q & \beta_2 v_4 + \beta_3\\
\beta_4 q & \beta_4 v_2+ 1 & \beta_4 v_3 - \beta_5 v_2/q + v_4/q & \beta_4 v_4 + \beta_5\\
q & v_2 & v_3 & v_4\\
-\beta_1 q & -\beta_1 v_2 & -\beta_1 v_3 - v_2/q & -\beta_1 v_4 + 1\ep \in \Gamma_0(q),
\ea}with $v_2, v_3, v_4 \pmod{q}$ chosen such that $\rb{v_3, v_4, (q, v_2)} = 1$, and $\rb{(q,v_2)^2, qv_3+v_2v_4} = q$. As $\gamma \in \Gamma_0(q)$, by considering the lower left block, we deduce that $v_2 = 0$, and solve $\beta_1 \equiv 0 \pmod{1}$. The conditions on $v_3, v_4$ then simplify as $(q,v_3) = 1$. Considering the second row, we solve
\ba
\beta_4 &\equiv -\frac{\ol{v_3} v_4}{q} \pmod{1}, & \beta_5 &\equiv \frac{\ol{v_3}v_4^2}{q} \pmod{1}.
\ea
So the Kloosterman sum is given by
\ba
\Kl_{q, s_\alpha s_\beta s_\alpha} \rb{(q,q), M, N} = \sum\limits_{\substack{v_3\ppmod{q}\\ (v_3, q) = 1}} \sum\limits_{v_4 \ppmod{q}} e\rb{\frac{M_2 \ol{v_3} v_4^2}{q}} = 0.
\ea

\item
Consider the Bruhat decomposition for summands in $\Kl_{q, s_\beta s_\alpha s_\beta} \rb{(q,q^2), M, N}$:
{\small
\ba
\gamma = &\bp 1 & \beta_1 & \beta_2 & \beta_3\\ & 1 & \beta_4 & \beta_5\\ &&1\\&&-\beta_1 & 1\ep \bp &&& -q^{-1}\\ &&q^{-1}\\ &q\\ -q\ep \bp 1 && -v_{23}/q^2 & v_{13}/q^2\\ &1& v_{13}/q^2 & v_{14}/q^2\\ &&1\\ &&&1\ep\\
= &\bp -\beta_3 q & \beta_2 q & \beta_2 v_{13}/q + \beta_1/q + \beta_3 v_{23}/q & \beta_2 v_{14}/q - \beta_3 v_{13}/q - 1/q\\
-\beta_5 q & \beta_4 q & \beta_4 v_{13}/q + \beta_5 v_{23}/q + 1/q & \beta_4 v_{14}/q - \beta_5 v_{13}/q\\
0 & q & v_{13}/q & v_{14}/q\\
-q & -\beta_1 q & -\beta_1 v_{13}/q + v_{23}/q & -\beta_1 v_{14}/q - v_{13}/q\ep \in \Gamma_0(q),
\ea}with $v_{13}, v_{14}, v_{23} \pmod{q^2}$ chosen such that $(q^2, v_{13}, v_{14}) = q$, and $(q, v_{23}, v_{34}) = 1$, where $v_{34} = -\frac{v_{13}^2 + v_{14}v_{23}}{q^2}$. As $\gamma\in\Gamma_0(q)$, by considering the lower left block, we solve $\beta_1\equiv 0\pmod{1}$. Then, $-\beta_1 v_{13} /q + v_{23}/q$ being an integer implies $q \mid v_{23}$, so $(q,v_{34})=1$. Write $v_{13} = q v'_{13}$, $v_{14} = q v'_{14}$, and $\beta_4 = \beta'_4/q$, $\beta_5 = \beta'_5/q$ for some $\beta'_4, \beta'_5\in\Z$. By considering the second row, we deduce that
\ba
\beta'_4 v'_{13} + \beta'_5 v_{23}/q + 1,  \beta'_4 v'_{14} - \beta'_5 v'_{13} &\in q\Z,
\ea
from which we deduce $\beta'_5 \equiv v'_{14} \ol{v_{34}} \pmod{q}$, and $\beta_5 \equiv \frac{v'_{14}\ol{v_{34}}}{q} \pmod{1}$. Writing $v_{23} = q v'_{23}$, the Kloosterman sum is given by
{\small
\ba
\Kl_{q, s_\beta s_\alpha s_\beta} \rb{(q,q^2), M, N} = \sum\limits_{\substack{v'_{13}, v'_{14}, v'_{23} \ppmod{q}\\ (q, v'_{13}, v'_{14}) = 1\\ (q, v_{34}) = 1}} e\rb{\frac{M_2 v'_{14}\ol{v_{34}}+N_2v'_{14}}{q}},
\ea}
where $v_{34} = -({v'_{13}}^2+v'_{14} v'_{23})$. We evaluate
{\small
\ba
&\sum\limits_{\substack{v'_{13}, v'_{23} \ppmod{q}\\ (q, v'_{13}) = 1}} 1 + \sum\limits_{\substack{v'_{13}, v'_{14}, v'_{23}\ppmod{q}\\ (q, v'_{14}) = 1,\: (q, v_{34}) = 1}} e\rb{\frac{M_2v'_{14}\ol{v_{34}}+N_2v'_{14}}{q}} = & q(q-1) - \sum\limits_{\substack{v'_{13}, v'_{14} \ppmod{q}\\ (q, v'_{14}) = 1}} e\rb{\frac{N_2v'_{14}}{q}} = q^2.
\ea}

\item
Consider the Bruhat decomposition for summands in $\Kl_{q, w_0} \rb{(q,q^2), M, N}$:
{\small\ba
\gamma = &\bp 1 & \beta_1 & \beta_2 & \beta_3\\ & 1 & \beta_4 & \beta_5\\ &&1\\&&-\beta_1 & 1\ep \bp && -q^{-1}\\ &&&-q^{-1}\\ q\\ &q\ep \bp 1 & v_2/q & v_3/q & v_4/q \\ & 1 & v_{13}/q^2 & v_{14}/q^2 \\ &&1\\ &&-v_2/q & 1\ep\\
= &
\bp \beta_2 q & \beta_2 v_2 + \beta_3 q & \beta_2 v_3 + \beta_3 v_{13}/q + \beta_1 v_2/q^2 - 1/q & \beta_2 v_4 - \beta_1/q + \beta_3 v_{14}/q\\
\beta_4 q & \beta_4 v_2 + \beta_5 q & \beta_4 v_3 + \beta_5 v_{13}/q + v_2/q^2 & \beta_4 v_4 + \beta_5 v_{14}/q - 1/q\\
q & v_2 & v_3 & v_4\\
-\beta_1 q & -\beta_1 v_2 + q & -\beta_1 v_3 + v_{13}/q & -\beta_1 v_4 + v_{14}/q\ep \in \Gamma_0(q),
\ea}with $v_2, v_3 ,v_4 \pmod{q}$, $v_{13}, v_{14} \pmod{q^2}$ chosen such that $v_{13} q + v_2 v_{14} - v_4 q^2 = 0$, $(q, v_2, v_3, v_4) = 1$, and $(q^2, v_{13}, v_{14}, v_{23}, v_{34}) = 1$, where $v_{23} = \frac{v_2v_{13}-v_3 q^2}{q}$ and $v_{34} = \frac{v_3v_{14}-v_4v_{13}}{q}$. As $\gamma \in \Gamma_0(q)$, by considering the lower left block, we deduce that $v_2 = 0$, and solve $\beta_1 \equiv 0 \pmod{1}$. The last row being integers implies that $q \mid v_{13}, v_{14}$. Write $v_{13} = q v'_{13}$, $v_{14} = qv'_{14}$. The relation $v_{13}q + v_2 v_{14} - v_4 q^2 = 0$ says $v'_{13} = v_4$. We check that $q \mid v_{23}$ as well, so $(q, v_{34}) = 1$. Write $\beta_4 = \beta'_4/q$, $\beta_5 = \beta'_5/q$ for some $\beta'_4, \beta'_5\in\Z$. By considering the second row, we deduce that
\ba
\beta'_4 v_3 + \beta'_5 v'_{13}, \; \beta'_4 v_4 + \beta'_5 v'_{14} - 1 \in q\Z,
\ea
from which we deduce $\beta'_5 \equiv v_3 \ol{v_{34}} \pmod{q}$, and $\beta_5 = \frac{v_3 \ol{v_{34}}}{q} \pmod{1}$. The Kloosterman sum is given by
{\small
\ba
\Kl_{q, w_0} \rb{(q,q^2), M, N} = \sum\limits_{\substack{v_3, v_4, v'_{14}\ppmod{q}\\ (q, v_3, v_4) = 1\\ (q, v_{34}) = 1}} e\rb{\frac{M_2v_3\ol{v_{34}} + N_2v'_{14}}{q}},
\ea}
where $v_{34} = v_3 v'_{14} - v_4^2$. 

Fix $v_4, v'_{14} \neq 0$. As $v_3\neq 0$ varies, $\ol{v_3} v_{34} \equiv v'_{14} - v_4^2\ol{v_3}$ runs through nonzero residues except $v'_{14}$ modulo $q$; hence, as $v_3$ varies, $v_3 \ol{v_{34}}$ runs through all residues except $\ol{v'_{14}}$ modulo $q$. Hence
{\small
\ba
\sum\limits_{\substack{v_3, v_4, v'_{14}\ppmod{q}\\ (q, v_4) = 1, \; (q, v'_{14}) = 1\\ (q, v_{34}) = 1}} e\rb{\frac{M_2v_3\ol{v_{34}} + N_2v'_{14}}{q}} = -\sum\limits_{\substack{v_4, v'_{14}\ppmod{q}\\ (q, v_4) = 1\\ (q, v'_{14})=1}} e\rb{\frac{M_2\ol{v'_{14}}+N_2v'_{14}}{q}} = -(q-1) S(M_2,N_2;q).
\ea}If $v_4\neq 0$ and $v'_{14} = 0$, then $v_{34} = -v_4^2$. The corresponding part of the sum becomes
\ba
\sum\limits_{\substack{v_3, v_4\ppmod{q}\\ (q, v_4) = 1}} e\rb{\frac{-M_2v_3\ol{v_4}^2}{q}} = 0.
\ea
Meanwhile, for $v_4 = 0$, we have $v'_{14} \neq 0$, and $v_{34} = v_3 v'_{14}$, so $v_3 \ol{v_{34}} = \ol{v'_{14}}$. Hence this part of the sum is
{\small
\ba
\sum\limits_{\substack{v_3, v'_{14} \ppmod{q}\\ (q, v_3) = 1 \\ (q, v'_{14}) = 1}} e\rb{\frac{M_2\ol{v'_{14}} + N_2v'_{14}}{q}} = (q-1) S(M_2,N_2;q).
\ea}Combining the parts above, we conclude that $\Kl_{q, w_0} \rb{(q,q^2), M, N} = 0$.

\item
Consider the Bruhat decomposition for summands in $\Kl_{q,w_0}\rb{(q,q^3),M,N}$:
{\small
\ba
\gamma = &\bp 1 & \beta_1 & \beta_2 & \beta_3\\ & 1 & \beta_4 & \beta_5\\ &&1\\&&-\beta_1 & 1\ep \bp && -q^{-1}\\ &&&-q^{-2}\\ q\\ &q^2\ep \bp 1 & v_2/q & v_3/q & v_4/q \\ & 1 & v_{13}/q^3 & v_{14}/q^3 \\ &&1\\ &&-v_2/q & 1\ep\\
= &
\bp \beta_2 q & \beta_2 v_2 + \beta_3 q^2 & \beta_2 v_3 + \beta_3 v_{13}/q + \beta_1v_2/q^3 - 1/q & \beta_2v_4-\beta_1/q^2+\beta_3v_{14}/q\\
\beta_4 q & \beta_4 v_2 + \beta_5 q^2 & \beta_4 v_3 + \beta_5 v_{13}/q + v_2/q^3 & \beta_4 v_4 + \beta_5 v_{14}/q - 1/q^2\\
q & v_2 & v_3 & v_4\\
-\beta_1 q & -\beta_1 v_2 + q^2 & -\beta_1 v_3 + v_{13}/q & -\beta_1 v_4 + v_{14}/q\ep \in \Gamma_0(q),
\ea}with $v_2, v_3, v_4 \pmod{q}$, $v_{13}, v_{14} \pmod{q^2}$ chosen such that $v_{13}q+v_2v_{14}-v_4q^3=0$, $(q,v_2,v_3,v_4) = 1$, and $(q^2, v_{13}, v_{14}, v_{23}, v_{34}) = 1$, where $v_{23} = \frac{v_2v_{13}-v_3q^3}{q}$ and $v_{34} = \frac{v_3v_{14}-v_4v_{13}}{q}$. As $\gamma \in \Gamma_0(q)$, by considering the lower left block, we deduce that $v_2 = 0$, and solve $\beta_1\equiv 0 \pmod{1}$. The last row being integers implies that $q \mid v_{13}, v_{14}$. Write $v_{13} = qv'_{13}$, $v_{14} = qv'_{14}$. The relation $v_{13}q+v_2v_{14}-v_4q^3=0$ says $v'_{13} = v_4 q$. We check that $q \mid v_{23}$ as well, so $(q,v_{34}) = 1$. Write $\beta_4 = \beta'_4/q$, $\beta_5 = \beta'_5/q^2$ for some $\beta'_4, \beta'_5\in\Z$. By considering the second row, we deduce that
\ba
\beta'_4v_3q+\beta'_5v'_{13}, \; \beta'_4v_4q + \beta'_5v'_{14}-1 \in q^2\Z,
\ea
from which we deduce $\beta'_5 \equiv v_3 \ol{v_{34}} \pmod{q^2}$, and $\beta_5 = \frac{v_3\ol{v_{34}}}{q^2} \pmod{1}$. The Kloosterman sum is given by
\ba
\Kl_{q,w_0}\rb{(q,q^3),M,N} = \sum\limits_{\substack{v_3,v_4\ppmod{q}, \; v'_{14}\ppmod{q^2}\\(q,v_3,v_4)=1, \;(q,v_{34}) = 1}} e\rb{\frac{M_2v_3\ol{v_{34}} + N_2v'_{14}}{q^2}},
\ea
where $v_{34} = v_3v'_{14} - v_4^2 q$. 

Fix $v_4\neq 0$. Then from $(q,v_{34}) = 1$ we have $(q,v'_{14}) = 1$, and $v_3 \neq 0$. For a fixed $v'_{14}$, we see that as $v_3$ varies, $\ol{v_3} v_{34} \equiv v'_{14} - v_4^2 \ol{v_3} q$ runs through nonzero residues modulo $q^2$ that are congruent to $v'_{14}\pmod{q}$, except $v'_{14}$; hence, as $v_3$ varies, $v_3\ol{v_{34}}$ runs through all residues modulo $q^2$ that are congruent to $v'_{14}\pmod{q}$, except $\ol{v'_{14}}$. Hence
{\small
\ba
\sum\limits_{\substack{v_3,v_4\ppmod{q}\\ v'_{14} \ppmod{q^2}\\ (q,v_3) = 1, \; (q,v_4)=1\\ (q,v'_{14}) = 1, \; (q,v_{34}) = 1}} e\rb{\frac{M_2v_3\ol{v_{34}} + N_2v'_{14}}{q^2}} = - \sum\limits_{\substack{v_4\ppmod{q}\\ v'_{14} \ppmod{q^2}\\ (q,v_4)=1,\; (q,v'_{14}) = 1}} e\rb{\frac{M_2 \ol{v'_{14}} + N_2 v'_{14}}{q^2}} = -(q-1) S(M_2,N_2;q^2).
\ea}Meanwhile, for $v_4=0$, we have $(q,v'_{14})=1$, and $v_{34} = v_3v'_{14}$, so $v_3\ol{v_{34}} = \ol{v'_{14}}$. Hence this part of the sum is
{\small
\ba
\sum\limits_{\substack{v_3\ppmod{q}\\ v'_{14}\ppmod{q^2}\\ (q,v_3) = 1, \; (q, v'_{14}) = 1}} e\rb{\frac{M_2\ol{v'_{14}}+N_2v'_{14}}{q^2}} = (q-1) S(M_2,N_2;q).
\ea}Combining the parts above, we conclude that $\Kl_{q,w_0}\rb{(q,q^3), M,N} = 0$.
\ee

\section{Automorphic forms and Whittaker functions}

We denote by $\cb{\varpi}$ an orthonormal basis of right $K$-invariant automorphic forms for $\Gamma_0(q)$, cuspidal or Eisenstein series. The space $L^2(\Gamma_0(q) \bs \Sp(4,\R)/ K)$ is equipped with the standard inner product 
\ba
\pb{f,g} = \int_{\Gamma_0(q)\bs \Sp(4,\R)/K} f(xy) \ol{g(xy)} dx d^*y.
\ea
An integral over the complete spectrum of $L^2(\Gamma_0(q)\bs \Sp(4,\R)/K)$ is denoted by $\int_{(q)} d\varpi$. All the automorphic forms $\varpi$ belong to representations $\pi$ of level $q' \mid q$, and we assume that $\cb{\varpi}$ contains all cuspidal newvectors of level $q' \mid q$. For simplicity in notations, we denote the local archimedean spectral parameter $\mu_\pi(\infty)$ by $\mu = (\mu_1, \mu_2)$.

Let $\varpi$ be an automorphic form for $\Gamma_0(q)$, with spectral parameter $\mu$. We suppose $\varpi$ is generic throughout the section. For $M = (M_1, M_2) \in \Z^2$, the $M$-th Fourier coefficient of $\varpi$ is given by
\ba
\varpi_M (g) = \int_{U(\Z)\bs U(\R)} \varpi(xg) \ol{\theta_M(x)} dx.
\ea
The Fourier coefficients $\varpi_M(g)$ are actually Whittaker functions. For $g = xyk \in\Sp(4,\R)$, we have
\begin{align}\label{eq:Fourier_in_Whittaker}
\varpi_M(g) = \frac{A_\varpi(M)}{M^\eta} \theta_M(x) \cdot W_\mu(\iota(M)y),
\end{align}
where $A_\varpi(M) \in \C$ is a constant, called the $M$-th Fourier coefficient of $\varpi$, $W_\mu: \R_+^2 \to \C$ is the standard Whittaker function on $\Sp(4,\R)$, with detailed descriptions found in \cite{Ishii2005}. As in the $\GL(n)$ case, the size of $\sigma_\pi(\infty)$ captures the growth of $W_\mu$ near the origin. Precisely, for a function $E$ on $\R_+^2$ and $X \in \R_+^2$, we define
\begin{align}\label{eq:E_def}
E^{(X)}(y_1, y_2) = E(X_1y_1, X_2y_2).
\end{align}

\begin{lem}\label{lem:5}
Assume that $\mu = (\mu_1, \mu_2)$ varies in some compact set $\Omega$, and let $Z\geq 1$. There exists $r\in \N$ and a compact set $S\sbe \R_+^2$ depending only on $\Omega$ (independent of $Z$), and a finite collection of functions $E_1, \cdots, E_r: \R_+^2\to \R$ depending on $\Omega$ and $Z$ that are uniformly bounded and supported in a compact subset of $S$ such that
\ba
\sum\limits_{j=1}^r \vb{\pb{E_j^{(1,Z)}, W_\mu}}^2 \gg_\Omega Z^{2\eta_2+2\sigma_\pi(\infty)} = Z^{3+2\sigma_\pi(\infty)}. 
\ea
\end{lem}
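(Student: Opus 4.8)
The plan is to localise to the region $y_2 \asymp 1/Z$, $y_1 \asymp 1$ — which is precisely what the dilated functions $E_j^{(1,Z)}$ detect — and to exploit there the known behaviour of the Whittaker function $W_\mu$ near the cusp. Taking the inner product of functions on $\R_+^2$ against $d^*y$ and letting $E$ be supported in a fixed compact set, the substitution $u_2 = Zy_2$ gives
\ba
\pb{E^{(1,Z)}, W_\mu} = Z^{2\eta_2}\int_{\R_+^2} E(y_1,u_2)\,\overline{W_\mu(y_1, u_2/Z)}\; y_1^{-2\eta_1} u_2^{-2\eta_2}\,\frac{dy_1}{y_1}\,\frac{du_2}{u_2},
\ea
the factor $Z^{2\eta_2}$ arising from the weight $y_2^{-2\eta_2}$ in $d^*y$; this already accounts for the $Z^{2\eta_2}$ in the target exponent, so it remains to extract a further $Z^{2\sigma_\pi(\infty)}$ from $W_\mu$ as its second argument tends to $0$.

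The one substantial input is the near-cusp asymptotics of $W_\mu(y_1, y_2)$ as $y_2 \to 0$ with $y_1$ in a fixed compact interval, for which I would appeal to the explicit description of the archimedean Whittaker function in \cite{Ishii2005} (the $\GL(n)$ analogue underpins the corresponding estimates in \cite{Blomer2019b}). There $W_\mu$ has an expansion with leading terms $c_w(\mu)\, y_1^{(w\mu)_1 + \eta_1}\, y_2^{(w\mu)_2 + \eta_2}$ indexed by the Weyl group $W = W(C_2)$, acting on $\mu$ by signed permutations of the two coordinates; as $y_2 \to 0$ the dominant terms are those with $\Re(w\mu)_2$ least, and this least value is $-\max\{|\Re\mu_1|,|\Re\mu_2|\} = -\sigma_\pi(\infty)$. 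Hence for $(y_1, u_2)$ in a fixed compact set $S$ and $Z$ large, $W_\mu(y_1, u_2/Z) = Z^{\sigma_\pi(\infty)-\eta_2}\Psi_{\mu,Z}(y_1,u_2) + (\text{lower order})$, where $\Psi_{\mu,Z}$ is, uniformly for $\mu\in\Omega$ and $Z\geq 1$, a bounded function on $S$ given by a combination of at most $|W|=8$ monomials $y_1^a u_2^b$ with $(a,b)$ in a compact set depending only on $\Omega$, coefficients of modulus $|c_w(\mu)|$, and $Z$-dependence confined to unimodular scalars (exponent coincidences contribute powers of $\log Z$, which only help). Feeding this back yields $\pb{E^{(1,Z)},W_\mu} = Z^{\eta_2+\sigma_\pi(\infty)}\pb{E,\Psi_{\mu,Z}}_* + (\text{l.o.t.})$, with $\pb{\cdot,\cdot}_*$ the inner product against $y_1^{-2\eta_1}u_2^{-2\eta_2}\frac{dy_1}{y_1}\frac{du_2}{u_2}$ on $S$. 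So the lemma, with exponent $Z^{2\eta_2+2\sigma_\pi(\infty)} = Z^{3+2\sigma_\pi(\infty)}$, reduces to finding a fixed finite family $E_1,\dots,E_r$ of real, uniformly bounded functions supported in a fixed compact subset of $S$ such that $\sum_{j=1}^r|\pb{E_j,\Psi_{\mu,Z}}_*|^2 \gg_\Omega 1$ uniformly in $\mu$ and $Z$.

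This last step is soft. The family $\{\Psi_{\mu,Z} : \mu\in\Omega,\ Z\geq 1\}$ — and its image under the bounded weight relating $\pb{\cdot,\cdot}_*$ to $L^2(S)$ — is totally bounded in $L^2(S)$, being a bounded set of combinations of boundedly many monomials with exponents in a compact set. So for any $\delta>0$ there is a finite-dimensional $V\sbe L^2(S)$, spanned by finitely many real, bounded, compactly supported functions, with $\operatorname{dist}_{L^2(S)}(\Psi_{\mu,Z},V)<\delta$ for all $\mu,Z$. Choosing $\delta$ to be half the uniform lower bound for $\|\Psi_{\mu,Z}\|_{L^2(S)}$ (discussed next) and $E_1,\dots,E_r$ an orthonormal basis of $V$ — automatically uniformly bounded, supported in a fixed compact subset of $S$, with $r=\dim V$ depending only on $\Omega$ — one gets $\sum_j|\pb{E_j,\Psi_{\mu,Z}}_*|^2 \gg \|P_V\Psi_{\mu,Z}\|_{L^2(S)}^2 \geq \tfrac14\|\Psi_{\mu,Z}\|_{L^2(S)}^2 \gg_\Omega 1$.

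The crux — and the only place genuine information about $\Sp(4)$ enters — is the uniform bound $\|\Psi_{\mu,Z}\|_{L^2(S)}\gg_\Omega 1$, equivalently that $W_\mu$ decays towards the cusp no faster than $y_2^{\eta_2-\sigma_\pi(\infty)}$, uniformly for $\mu\in\Omega$. By continuity of $\mu\mapsto W_\mu$ and compactness of $\Omega$, this comes down to the non-vanishing, for each $\mu\in\Omega$, of the dominant-order coefficient $\sum_{w:\,\Re(w\mu)_2=-\sigma_\pi(\infty)} c_w(\mu)\,y_1^{(w\mu)_1+\eta_1}$, a nonzero exponential polynomial in $\log y_1$ unless the relevant $c_w(\mu)$ all vanish at once; since the $c_w(\mu)$ are explicit ratios of Gamma factors in \cite{Ishii2005}, this is a check on their (explicit, lower-dimensional) vanishing and polar loci. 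This is the step I expect to demand the most care; everything else is the change of variables plus a soft approximation argument, and the need for a finite collection of test functions rather than a single one is exactly what absorbs the oscillation of $W_\mu$ in $y_1$ and $y_2$ caused by the imaginary parts of $\mu$ as $\mu$ ranges over $\Omega$.
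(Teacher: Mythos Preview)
Your approach is broadly sound but genuinely different from the paper's, and the point where you flag ``this is the step I expect to demand the most care'' is exactly where the two diverge.

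You work directly with the full Weyl-group asymptotic expansion of $W_\mu$ as $y_2\to 0$, package the dominant terms into $\Psi_{\mu,Z}$, and then use a soft total-boundedness / finite-dimensional projection argument to produce the $E_j$. This is elegant and, if pushed through, would even yield $Z$-independent test functions, slightly more than the lemma asks. The delicate point is the uniform lower bound $\|\Psi_{\mu,Z}\|_{L^2(S)}\gg_\Omega 1$: since several Weyl translates can contribute to the leading order, and since the gap between leading and sub-leading exponents is not bounded away from zero as $\mu$ ranges over $\Omega$ (think $\Re\mu_1-\Re\mu_2\to 0$), both the non-cancellation among dominant terms and the uniformity of the ``lower order'' remainder require genuine work. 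Your remark that exponent coincidences produce helpful $\log Z$ factors is correct at the coincidence locus, but the transition through a neighbourhood of it is where the Gram matrix of the monomials degenerates while the $c_w(\mu)$ blow up, and one has to check these effects balance.

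The paper sidesteps this entirely. It takes the Mellin transform $M_\mu^*(s)$ from \cite{Ishii2005}, observes that its poles in $s_2$ sit at $\pm\mu_1,\pm\mu_2$ (and their negative shifts), and applies the commuting differential operators $\mc D_\beta=-y_2\partial_{y_2}+\beta$ with $\beta\in\{\mu_1,\mu_2,-\mu_2\}$ to kill three of the four first-order poles. What survives after a contour shift is a \emph{single} leading term
\[
\hat W_\mu(y)=y_2^{-\mu_1}\,W_\mu^{**}(y_1)+O_{y_1,\mu}\bigl(y_2^{-\sigma_\pi(\infty)+1/2}\bigr),
\]
where $W_\mu^{**}(y_1)$ is an explicit product of nonvanishing Gamma values times a $\GL(2)$ Whittaker function $W_{\mu_2}^*(y_1)$. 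The non-vanishing check is then immediate, and uniformity in $\mu$ comes for free from the uniform contour shift. An elementary iteration (if $\mc D_\beta w(y)\sim cy^{-a}$ then $|w(y)|\geq\tfrac12 c y^{-a}$ on a suitable sub-interval) undoes the differential operators and gives $|W_\mu^*(y)|\gg y_2^{-\sigma_\pi(\infty)}|W_\mu^{**}(y_1)|$ on a $Z$-dependent interval $y_2\in[\gamma_1/Z,\gamma_2/Z]$. The test functions are then simply $E_j(y_1,y_2)=\mathbf 1_{[\gamma_1,\gamma_2]}(y_2)\,E_j^*(y_1)$ with the $E_j^*$ chosen from the $\GL(2)$ problem for $W_\mu^{**}$.

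In short: your route trades an analytic trick (isolating one pole via differential operators, reducing to the $\GL(2)$ Whittaker function) for a linear-algebra argument that must absorb all Weyl contributions at once. Both lead to the lemma, but the paper's device makes the ``crux'' step a one-line non-vanishing of explicit Gamma factors rather than a uniformity analysis over $\Omega$.
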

\begin{proof}
The case $Z\ll 1$ is proved in \cite{BBM2017, Blomer2019b}. For each $\mu \in \Omega$, choose an open set $S_\mu\sbe \R_+^2$ such that $\Re W_\mu(y) \neq 0$ for all $y\in S_\mu$ or $\Im W_\mu(y) \neq 0$ for all $y\in S_\mu$. Now choose open neighbourhoods $U_\mu$ about $\mu$ such that $\Re W_{\mu^*}(y) \neq 0$ for all $y\in S_\mu$ and $\mu^* \in U_\mu$, or $\Im W_{\mu^*}(y) \neq 0$ for all $y\in S_\mu$ and $\mu^* \in U_\mu$. By compactness, $\Omega$ is covered by a finite collection of neighbourhoods $U_{\mu_1}, \cdots, U_{\mu_r}$, and we may pick corresponding $E_j$ to be real-valued functions with supports on $S_{\mu_j}$ and non-vanishing on the interior $S_{\mu_j}^\circ$. 

Now suppose $Z\gg 1$ is sufficiently large. Consider the following renormalisation of the Whittaker function:
\begin{align}\label{eq:W_Wstar}
W_\mu^*(y) := y^{-\eta} W_\mu(y). 
\end{align}
The Mellin transform $M_\mu^*(s) = \int_{\R_+^2} W_\mu^*(y) y^s \frac{dy_1}{y_1} \frac{dy_2}{y_2}$ is given by \cite{Ishii2005} (where $\nu_1, \nu_2$ in \cite{Ishii2005} are $\mu_1+\mu_2$ and $\mu_1-\mu_2$ in our notation)
{\small 
\ba
M_\mu^*(s) = &2^{-4} \Gamma\rb{\frac{s_1+\mu_1+\mu_2}{2}}\Gamma\rb{\frac{s_1+\mu_1-\mu_2}{2}}\Gamma\rb{\frac{s_1-\mu_1+\mu_2}{2}}\Gamma\rb{\frac{s_1-\mu_1-\mu_2}{2}}\\
&\Gamma\rb{\frac{s_2+\mu_1}{2}}\Gamma\rb{\frac{s_2-\mu_1}{2}}\Gamma\rb{\frac{s_2+\mu_2}{2}}\Gamma\rb{\frac{s_2-\mu_2}{2}}\\
&\cb{\Gamma\rb{\frac{s_1+s_2+\mu_1}{2}}\Gamma\rb{\frac{s_1+s_2-\mu_1}{2}}}^{-1} \tensor[_3]{F}{_2}\left( \begin{array}{c} \frac{s_1}{2}, \frac{s_2+\mu_2}{2}, \frac{s_2-\mu_2}{2}\\ \frac{s_1+s_2+\mu_1}{2}, \frac{s_1+s_2-\mu_1}{2} \end{array} \middle| 1 \right).
\ea}
By Weyl group symmetry, we may assume without loss of generality that $\sigma_\pi(\infty) = \Re \mu_1$. For $\Re(s_1)$ sufficiently large, $M_\mu^*(s)$ is holomorphic for $\Re(s_2) > \sigma_\pi(\infty)$, as poles can only occur at $s_2 = \pm \mu_1-k$, $\pm \mu_2-k$ for $k\in\N_0$. Hence, for $\Re(s_1)$ sufficiently large, the function
\ba
M_\mu^\dagger (s) := M_\mu^*(s) (s_2+\mu_1)(s_2-\mu_1)(s_2+\mu_2)(s_2-\mu_2)
\ea
is holomorphic for $\Re(s_2) > \sigma_\pi(\infty) - 1$.

For $\beta\in\C$, let $\mc D_\beta = -y_2\partial_{y_2} + \beta$. This is a commutative family of differential operators, which correspond to multiplication by $s_2+\beta$ under Mellin transform. Now let
\ba
\hat M_\mu(s) := \frac{M_\mu^\dagger (s)}{s_2-\mu_1} = M_\mu^*(s) (s_2+\mu_1)(s_2+\mu_2)(s_2-\mu_2).
\ea
Taking inverse Mellin transforms, we get
\ba
\hat W_\mu(y) = \mc D_{\mu_1} \mc D_{\mu_2} \mc D_{-\mu_2} W_\mu^*(y).
\ea
On the other hand, we compute the inverse Mellin transform directly, and by shifting the contour to $\Re(s_2) = \sigma_\pi(\infty) - \frac{1}{2}$, we obtain the estimate
\ba
\hat W_\mu^*(y) = y_2^{-\mu_1} W_\mu^{**} (y_1) + \mc O_{y_1,\mu}(y_2^{-\sigma_\pi(\infty)+\frac{1}{2}})
\ea
for $y_2\to 0$, where
\ba
W_\mu^{**}(y_1) = \Gamma\rb{\mu_1+1} \Gamma\rb{\frac{\mu_1+\mu_2}{2}+1} \Gamma\rb{\frac{\mu_1-\mu_2}{2}+1} W_{\mu_2}^* (y_1) y_1^{-\mu_1},
\ea
where $W_{\mu_2}^* (y_1) = y_1^{-1/2} W_{\mu_2} (y_1)$ is a normalised $\GL(2)$-Whittaker function.

The rest of the proof follows the argument in \cite{Blomer2019b}. We see that if $\mc D_\beta w(y) \sim cy^{-a}$ as $y\to 0$ for some constants $a\geq 0$, $\beta, c \in \C$ and $Z\geq 1$ is sufficiently large, then there exist constants $0<\gamma_1<\gamma_2<1$ (depending on all parameters, but uniformly bounded away from 0 when $\beta, c, a$ vary in a fixed compact set and $Z$ is sufficiently large) such that $\vb{w(y)} \geq \frac{1}{2} cy^{-a}$ for $y\in [\gamma_1/Z, \gamma_2/Z]$. Iterating this argument, and adjust the constants $\gamma_1, \gamma_2$ if necessary, we deduce that
\ba
\vb{W_\mu^* (y)} \gg y_2^{-\sigma_\pi(\infty)} \vb{W_\mu^{**} (y_1)}
\ea
for $y\in[\gamma_1/Z, \gamma_2/Z]$, when $y_2$ and $\mu$ vary in some fixed compact domain. Now choose functions $E_j^{*}: \R^+ \to \C$, depending on $\Omega$ but not $Z$, such that $\sum_j \vb{\pb{E_j^{*}, W_\mu^{**}}}^2 \gg 1$ for $\mu\in\Omega$. Now define $E_j(y_1, y_2) = \delta_{\gamma_1\leq y_2\leq \gamma_2} E_j^{*} (y_1)$. This choice depends on $Z$, but the support of $E$ varies inside some interval depending only on $\Omega$. We then obtain
\ba
\sum\limits_j \vb{\pb{E_j^{(1,Z)}, W_\mu^*}}^2 \gg Z^{2\sigma_\pi(\infty)}.
\ea
Using the relation \eqref{eq:W_Wstar}, we obtain the lemma.
\end{proof}

\section{Hecke eigenvalues and Fourier coefficients}\label{section:Hecke_Fourier}

Let $\mc M$ be a set of matrices in $\GSp(4,\Q)^+$ that is left- and right-invariant under $\Gamma = \Sp(4,\Z)$ and is a finite union $\bigcup\limits_j \Gamma \mc M_j$ of left cosets. Then $\mc M$ defines a Hecke operator $T_{\mc M}$ on the space of cuspidal automorphic forms by
\ba
T_{\mc M} \varpi(g) = \sum\limits_j \varpi(\mc M_j g).
\ea
For a matrix $g\in \GSp(4,\Q)^+$, we denote by $T_g$ the Hecke operator $T_{\Gamma g \Gamma}$. For $m\in \N$, let 
\ba
S(m) :&= \cbm{M \in \GSp(4,\Z)^+}{M^\top J M = mJ}, & J = \bp &I_2\\-I_2\ep.
\ea
The $m$-th standard Hecke operator is then given by $T(m) = T_{S(m)}$. The set of matrices
\begin{align}\label{eq:Sm_coset_rep}
{\small \mc H(m) = \cbm{\bp A & m^{-1} BD\\ & D\ep\in S(m)}{A = \bp a_1 & a\\ & a_2\ep, B = \bp b_1 & b_2\\ b_2 & b_3\ep, \begin{array}{l} a_1, a_2 > 0, \;0\leq a < a_2,\\ 0\leq b_i < m, AD^\top = mI_2,\\ BD\equiv 0\pmod{m}\end{array}}.}
\end{align}
gives a complete system of left coset representatives for $\Gamma \bs S(m)$ \cite{Spence1972}. For $r\in\N_0$, $0\leq a \leq b \leq r/2$ and any prime $p$, define
\ba
T_{a,b}^{(r)} (p) := T_{\diag(p^a, p^b, p^{r-a}, p^{r-b})}.
\ea
When the context is clear, we suppress $p$ from the notation, and write $T_{a,b}^r$ instead. Then $T(p^r)$ admits a decomposition
\ba
T(p^r) = \sum\limits_{0\leq a \leq b \leq r/2} T_{a,b}^{(r)}(p). 
\ea
It is well-known that the Hecke algebra $\ms H$ of $\Sp(4,\R)$ is generated by $T(p) = T_{0,0}^{(1)}(p)$ and $T_{0,1}^{(2)}(p)$ for primes $p$, along with the identity. 

We also define involutions $T_{\varepsilon_1}$, $T_{\varepsilon_2}$ on the space of cuspidal automorphic forms by
\ba
T_{\varepsilon_1}\varpi(g) &= \varpi(\varepsilon_1 g), & T_{\varepsilon_2}\varpi\rb{\bp Y & X\\ & (Y^{-1})^\top\ep} &= \varpi\rb{\bp Y & -X\\ & (Y^{-1})^\top\ep}, 
\ea
where $\varepsilon_1 = \diag(-1,1,-1,1)$. It is clear that
\begin{align}\label{eq:Teps_action}
(T_{\varepsilon_1} \varpi)_{(M_1, M_2)}(g) &= \varpi_{(-M_1,M_2)}(g), & (T_{\varepsilon_2} \varpi)_{(M_1, M_2)}(g) &= \varpi_{(M_1,-M_2)}(g).
\end{align}
It is also straightforward to check that $T_{\varepsilon_1}$, $T_{\varepsilon_2}$ commute with the Hecke operators and the invariant differential operators. So we may assume a cuspidal automorphic form $\varpi$ is also an eigenfunction of $T_{\varepsilon_1}$, $T_{\varepsilon_2}$. 

Let $\pi$ be the irreducible automorphic representation corresponding to $\varpi$. We write $\lambda(m, \pi)$ and $\lambda_{a,b}^{(r)}(p, \pi)$ to denote the eigenvalue of $\varpi$ with respect to $T(m)$ and $T_{a,b}^{(r)}$ respectively, and write $\lambda'(m, \pi) := m^{-3/2} \lambda(m,\pi)$. Again, we omit $\pi$ from the notation when the context is clear. 

It is known that if $\varpi$ is generic and $L^2$-normalised, then by \cite[Theorem 1.1]{CI2019} and \cite[Theorem 3]{Li2010}, we have
\begin{align}\label{eq:A11_estimate}
\vb{A_\varpi(1,1)}^2 \asymp_\mu \frac{1}{[\Sp(4,\Z): \Gamma_0(q)] L(1,\pi, \Ad)} \gg_\mu q^{-3-\varepsilon}.  
\end{align}
In particular, $A_\varpi(1,1) \neq 0$. 

\begin{ntn}
Let $\varpi$ be an $L^2$-normalised generic cuspidal newform. For the rest of the section, it is however instructive to have an alternative normalisation, such that the $(1,1)$-st Fourier coefficient is 1. To avoid confusion, we always denote by $\varpi$ an $L^2$-normalised form, and by $\varpi_1$ a scalar multiple of $\varpi$ such that $A_{\varpi_1}(1,1) = 1$. From \eqref{eq:A11_estimate}, we see that $\varpi_1 = k\varpi$ for some $\vb{k}\ll q^{3+\varepsilon}$. 
\end{ntn}

Now fix a prime $p \nmid q$. Let $M = (M_1, M_2)$, and $0 \leq c, d \leq r$ such that $p^{d-c} \mid M_1$ and $p^{r-2d} \mid M_2$. Write 
\ba
\Gamma \diag(p^a, p^b, p^{r-a}, p^{r-b}) \Gamma = \bigcup\limits_i \Gamma h_i
\ea
as a finite union of left cosets. We can assume that $h_i \in U(\Q)T(\Q_+)$. Consider the decomposition $h_i = \hat y_i \hat x_i$, with $\hat y_i \in T(\Q^+)$, $\hat x_i \in U(\Q^+)$. We define exponential sums
\ba
\mf S_{a,b,M}^{(r)} (c,d) := \sum\limits_{\substack{\Gamma h_i \sbe \Gamma \diag(p^a, p^b, p^{r-a}, p^{r-b}) \Gamma\\ \hat y_i = \diag (p^c, p^d, p^{r-c}, p^{r-d})}} \theta_M(\hat x_i),
\ea
and
\ba
\mf S^{(r)} (c,d) := \sum\limits_{0\leq a, b\leq r/2} \mf S_{a,b,(1,1)}^{(r)} (c,d) = \sum\limits_{\substack{\Gamma h_i \sbe S(p^r)\\ \hat y_i = \diag(p^c, p^d, p^{r-c}, p^{r-d})}} \theta(\hat x_i). 
\ea

\begin{prp}\label{prp:Hecke_equation}
We have
\ba
\lambda_{a,b}^{(r)}(p) A_\varpi(M) = \sum\limits_{\substack{0\leq c, d \leq r\\ p^{c-d} \mid M_1, p^{2d-r} \mid M_2}} \mf S_{a,b,M}^{(r)} (c,d) p^{2c+d-\frac{3r}{2}} A_\varpi(M_1 p^{d-c}, M_2 p^{r-2d}).
\ea
\end{prp}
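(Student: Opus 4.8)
The statement is a Hecke-equivariance identity for Fourier coefficients, so the natural strategy is to unfold the action of the Hecke operator $T_{a,b}^{(r)}(p)$ on $\varpi$ and compute the $M$-th Fourier coefficient of both sides, matching Whittaker data. Write $T_{a,b}^{(r)}(p)\varpi = \lambda_{a,b}^{(r)}(p)\varpi$, take the $M$-th Fourier coefficient of the left-hand side using the coset decomposition $\Gamma\diag(p^a,p^b,p^{r-a},p^{r-b})\Gamma=\bigcup_i\Gamma h_i$ with $h_i=\hat y_i\hat x_i\in T(\Q_+)U(\Q)$, and reduce to summing $\varpi_M(h_i g)$ over $i$.

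**Key steps.** First I would expand
\ba
(T_{a,b}^{(r)}(p)\varpi)_M(g) = \sum_i \int_{U(\Z)\bs U(\R)} \varpi(x h_i g)\,\ol{\theta_M(x)}\,dx.
\ea
For each $i$, substitute $x h_i = h_i (h_i^{-1} x h_i)$ and use that $h_i^{-1}U(\R)h_i$ still lies in (a conjugate of) $U$; since $\hat y_i$ is diagonal, conjugation by $\hat y_i$ scales the unipotent coordinates by known powers of $p$, changing the measure and rescaling the character $\theta_M$ into $\theta_{M'}$ for an appropriate $M'$ depending on $(c,d)$ and $M$. The character $\theta_M$ evaluated on the $\hat x_i$ part produces the factor $\theta_M(\hat x_i)$, which after summing over those $h_i$ with $\hat y_i=\diag(p^c,p^d,p^{r-c},p^{r-d})$ gives exactly the exponential sum $\mf S_{a,b,M}^{(r)}(c,d)$. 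Collecting the remaining integral, one recognizes the $M''$-th Fourier coefficient of $\varpi$ for $M'' = (M_1 p^{d-c}, M_2 p^{r-2d})$ (hence the divisibility constraints $p^{c-d}\mid M_1$, $p^{2d-r}\mid M_2$ forcing integrality). Finally, using \eqref{eq:Fourier_in_Whittaker} to convert Fourier coefficients into the constant $A_\varpi(\cdot)$ times the common Whittaker function $W_\mu$ — noting the Whittaker functions on both sides agree after the diagonal scaling because $\iota(M)\hat y_i$ and $\iota(M'')$ differ by an element absorbed into $K$ up to the tracked powers of $p$ — one cancels $W_\mu$ and is left with the scalar identity. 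The powers of $p$ must bookkeep to $p^{2c+d-3r/2}$: a factor $p^{-3r/2}$ comes from the normalization in the definition of $A$ relative to $\varpi_1$/the $M^\eta$ in \eqref{eq:Fourier_in_Whittaker} with $\eta=(2,3/2)$, and $p^{2c+d}$ is the Jacobian from the conjugation of the unipotent integral by $\hat y_i$ together with the character rescaling.

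**Main obstacle.** The delicate part is the precise bookkeeping of the powers of $p$: tracking how conjugation by $\hat y_i=\diag(p^c,p^d,p^{r-c},p^{r-d})$ acts on each coordinate $x_{12},x_{13},x_{23},x_{24}$ of $U$ (recalling the symplectic relations $x_{12}=-x_{43}$, $x_{14}=x_{23}+x_{12}x_{24}$), how this changes $dx$, how it rescales $\theta_M$ (only $N_1 x_{12}+N_2 x_{24}$ appear in \eqref{eq:character_def}), and how the argument $\iota(M)y$ of $W_\mu$ transforms — all of these must combine to yield the single clean exponent $2c+d-\tfrac{3r}{2}$ and the arguments $(M_1 p^{d-c}, M_2 p^{r-2d})$. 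A secondary point is justifying that one may take the coset representatives $h_i$ in $T(\Q_+)U(\Q)$ at all (Iwasawa over $\Q$) and that the rearrangement of the sum over $i$ according to the value of $\hat y_i$ is legitimate; both are standard but should be stated. Everything else — the unfolding, the Fubini interchange, cancellation of $W_\mu$ — is routine once the scaling is pinned down.
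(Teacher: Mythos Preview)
Your approach is essentially the paper's: compute the $M$-th Fourier coefficient of $T_{a,b}^{(r)}(p)\varpi$ both as $\lambda_{a,b}^{(r)}(p)\,\varpi_M$ and by unfolding the coset sum, change variables to separate the torus and unipotent parts of $h_i$, group the terms by $\hat y_i=\diag(p^c,p^d,p^{r-c},p^{r-d})$ to produce $\mf S_{a,b,M}^{(r)}(c,d)$, and cancel the common Whittaker factor. Two small corrections: since the Hecke cosets act on the left the integrand is $\varpi(h_i x g)$, not $\varpi(x h_i g)$, so the relevant substitution is $h_i x = x'\hat y_i$ rather than $x h_i = h_i(h_i^{-1}xh_i)$; and the paper's clean device for the Jacobian and the divisibility constraints is to first enlarge the unipotent integral from $U(\Z)\backslash U(\R)$ to $U(p^r\Z)\backslash U(\R)$ (with the compensating factor $p^{-4r}$) before changing variables, after which periodicity in $x'$ forces $p^{c-d}\mid M_1$ and $p^{2d-r}\mid M_2$.
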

\begin{proof}
We compute the Fourier coefficient of $T_{a,b}^{(r)} \varpi$ in two ways. On one hand, we have
\begin{align}\label{eq:Hecke_eq1}
\int_{U(\Z)\bs U(\R)} T_h \varpi(xy) \ol{\theta_M(x)} dx = \lambda_{a,b}^{(r)}(p) \frac{A_\varpi(M)}{M^\eta} W_\mu(\iota(M)y).
\end{align}
On the other hand, we expand the Hecke operator
\ba
\int_{U(\Z)\bs U(\R)} T_h \varpi(xy) \ol{\theta_M(x)} dx = &\sum\limits_{\Gamma h_i} \int_{U(\Z)\bs U(\R)} \varpi(h_i xy) \ol{\theta_M(x)} dx\\
= & p^{-4r} \sum\limits_{\Gamma h_i} \int_{U(p^r \Z)\bs U(\R)} \varpi(h_i xy) \ol{\theta_M(x)} dx.
\ea
Write $h_i x = x' \hat y_i$, with $x' \in U(\R)$, and $\hat y_i = \diag (c_1, \cdots, c_4)$. A simple calculation shows that
\ba
x'_{k,l} = c_l \sum\limits_j (h_i)_{kj} x_{jl}. 
\ea
In particular, we have
\ba
x_{12} &= \frac{c_2}{c_1} x'_{12} - \frac{(h_i)_{12}}{c_1} = \frac{c_2}{c_1}x'_{12} - (\hat x_i)_{12}, & x_{24} &= \frac{c_4}{c_2} x'_{24} - \frac{(h_i)_{24}}{c_2} = \frac{c_4}{c_2} x'_{24} - (\hat x_i)_{24}.
\ea
Making this substitution, the expression becomes
\ba
p^{-4r} \sum\limits_{\Gamma h_i} \prod\limits_{k,l} \int_{\sum\limits_j (h_i)_{kj}x_{jl}}^{\frac{c_k}{c_l}p^r+\sum\limits_j (h_i)_{kj}x_{jl}} \varpi(x' \hat y_i y) e\rb{M_1 (\hat x_i)_{12} + M_2 (\hat x_i)_{24}} e\rb{-\frac{c_2}{c_1} M_1 x'_{12} - \frac{c_4}{c_2} M_2 x'_{24}} \frac{c_l}{c_k} dx'_{k,l},
\ea
where $(k,l)$ runs through the indices $(1,2), (1,3), (2,3), (2,4)$. 
 By periodicity, we shift the integral and get
\ba
p^{-4r} \sum\limits_{\Gamma h_i} \prod\limits_{k,l} \int_0^{\frac{c_k}{c_l}p^r} \varpi(x' \hat y_i y) e\rb{M_1 (\hat x_i)_{12} + M_2 (\hat x_i)_{24}} e\rb{-\frac{c_2}{c_1} M_1 x'_{12} - \frac{c_4}{c_2} M_2 x'_{24}} \frac{c_l}{c_k} dx'_{k,l},
\ea
Since $\varpi(x' \hat y_i y)$ is 1-periodic with respect to $x'_{kl}$, this integral vanishes unless $c_1\mid c_2 M_1$ and $c_2 \mid c_4 M_2$. We sum over the terms with the same $\hat y_i = (p^c, p^d, p^{r-c}, p^{r-d})$ and get
\ba
\sum\limits_{\substack{0\leq c, d \leq r\\ p^{c-d} \mid M_1, p^{2d-r} \mid M_2}} \mf S_{a,b,M}^{(r)} (c,d) \int_{U(\Z)\bs U(\R)} \varpi(x' \hat y_i y) e\rb{-p^{d-c} M_1 x'_{12} - p^{r-2d} M_2 x'_{24}} dx'.
\ea
Evaluating the integral gives
\begin{align}\label{eq:Hecke_eq2}
\sum\limits_{\substack{0\leq c, d \leq r\\ p^{c-d} \mid M_1, p^{2d-r} \mid M_2}} \mf S_{a,b,M}^{(r)} (c,d) p^{2c+d-\frac{3r}{2}} \frac{A_\varpi(p^{d-c}M_1, p^{r-2d}M_2)}{M^\eta} W_\mu(\iota(M)y). 
\end{align}
Comparing \eqref{eq:Hecke_eq1} and \eqref{eq:Hecke_eq2} gives the result.
\end{proof}

\begin{thm}\label{thm:eigenvalue_Fourier}
Let $\varpi_1 \in V_\pi$ be a cuspidal newform such that $A_{\varpi_1}(1,1) = 1$, and $p\nmid q$ a prime. The Hecke eigenvalues $\lambda(p^r,\pi)$ of $\pi$ with respect to $T(p^r)$ are given by
\ba
\lambda(p,\pi) &= p^{3/2} A_{\varpi_1}(1,p),\\
\lambda(p^r, \pi) &= p^{3r/2}\rb{A_{\varpi_1}(1, p^r) - p^{-1} A_{\varpi_1}(1, p^{r-2})}, & r&\geq 2.
\ea
\end{thm}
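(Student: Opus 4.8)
The plan is to combine the decomposition $T(p^r) = \sum_{0\le a\le b\le r/2} T_{a,b}^{(r)}(p)$ with \Cref{prp:Hecke_equation} and specialise to $M = (1,1)$. Summing the identity of \Cref{prp:Hecke_equation} over all $(a,b)$ with $0\le a\le b\le r/2$ and using $A_{\varpi_1}(1,1) = 1$, the index set of the inner sum reduces, for $M = (1,1)$, to $c\le d\le r/2$, and one obtains
\ba
\lambda(p^r,\pi) = \sum_{0\le c\le d\le r/2} \mf S^{(r)}(c,d)\, p^{2c+d-\frac{3r}{2}}\, A_{\varpi_1}\rb{p^{d-c}, p^{r-2d}}.
\ea
Thus everything reduces to evaluating the exponential sums $\mf S^{(r)}(c,d)$, i.e.\ the $\theta$-twisted count of those left cosets of $S(p^r)$ whose torus part in the decomposition $h = \hat y\hat x$ equals $\diag(p^c,p^d,p^{r-c},p^{r-d})$, and to showing that almost all of them vanish.

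\textbf{Reduction to an elementary sum.} To evaluate $\mf S^{(r)}(c,d)$ I would use the explicit representatives $\mc H(p^r)$ of \eqref{eq:Sm_coset_rep}. For $h = \bp A & p^{-r}BD\\ & D\ep$ with $A = \bp p^c & a\\ & p^d\ep$, writing out $D = p^r(A^\top)^{-1}$ and $\hat y^{-1}h$ shows that the torus part is indeed $\diag(p^c,p^d,p^{r-c},p^{r-d})$ and that the two Iwasawa coordinates entering the character are
\ba
(\hat x)_{12} = a\, p^{-c}, \qquad (\hat x)_{24} = b_3\, p^{-2d},
\ea
so $\theta(\hat x) = e\rb{a\,p^{-c} + b_3\,p^{-2d}}$. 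Translating the condition $BD\equiv 0\pmod{p^r}$ into congruences among $a, b_1, b_2, b_3$ (and recording the ranges $0\le a<p^d$, $0\le b_i<p^r$), each $\mf S^{(r)}(c,d)$ becomes a completely elementary exponential sum.

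\textbf{The case analysis.} The crux is then a case analysis on $(c,d)$. For $(c,d) = (0,0)$ the representatives are $\bp I_2 & B\\ & p^r I_2\ep$ with $B$ symmetric mod $p^r$, each contributing $1$, so $\mf S^{(r)}(0,0) = p^{3r}$. For $c<d$ (and likewise for $c = d\ge 2$) the congruences force $b_3$ (resp.\ $a$) to range over a complete residue system modulo a positive power of $p$ on which the character is nontrivial, so $\mf S^{(r)}(c,d) = 0$. The one genuinely non-vanishing off-term is $c = d = 1$: one splits according to whether $p\mid a$ or not, the $p\mid a$ part vanishing as above, while the $p\nmid a$ part — after the congruences have pinned down the remaining $b$-variables — leaves the Ramanujan-type sum $\sum_{a=1}^{p-1} e(a/p) = -1$, giving $\mf S^{(r)}(1,1) = -p^{3r-4}$ (present only when $r\ge 2$, since $c = d = 1$ needs $d\le r/2$). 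Substituting $\mf S^{(r)}(0,0) = p^{3r}$, $\mf S^{(r)}(1,1) = -p^{3r-4}$ and $\mf S^{(r)}(c,d) = 0$ otherwise into the displayed formula gives $\lambda(p,\pi) = p^{3/2}A_{\varpi_1}(1,p)$ and, for $r\ge 2$, $\lambda(p^r,\pi) = p^{3r/2}\rb{A_{\varpi_1}(1,p^r) - p^{-1}A_{\varpi_1}(1,p^{r-2})}$, as claimed.

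\textbf{Main obstacle.} I expect the main difficulty to be exactly this last point. The temptation is to believe that the $a$-summation always annihilates the terms with $c\ne 0$, which would give the clean but \emph{wrong} formula $\lambda(p^r,\pi) = p^{3r/2}A_{\varpi_1}(1,p^r)$; the subtlety is that at $c = d = 1$ the congruences $BD\equiv 0\pmod{p^r}$ interact with the off-diagonal entry $a$ of $A$ in a way that leaves a nonzero Ramanujan-type remainder, responsible for the correction term $-p^{-1}A_{\varpi_1}(1,p^{r-2})$. Keeping precise track of these congruences, and of the exact powers of $p$ appearing in $\mf S^{(r)}(c,d)$ and in $p^{2c+d-3r/2}$, is the heart of the argument; the remaining manipulations are routine.
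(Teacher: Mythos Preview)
Your proposal is correct and follows essentially the same route as the paper: specialise \Cref{prp:Hecke_equation} at $M=(1,1)$, compute $\mf S^{(r)}(c,d)$ via the explicit coset representatives \eqref{eq:Sm_coset_rep}, show that only $(c,d)\in\{(0,0),(1,1)\}$ survive with values $p^{3r}$ and $-p^{3r-4}$, and substitute. The one refinement to be aware of when you write it out is that the congruences $BD\equiv 0\pmod{p^r}$ constrain $b_1,b_2,b_3$ in a way that depends on $v_p(a)$ (see \eqref{eq:B_condition}), so you cannot simply sum $b_3$ or $a$ freely over a complete residue system; the paper handles this by stratifying the $a$-sum according to $v_p(a)$ before summing over $B$, which is precisely the bookkeeping you anticipate in your final paragraph.
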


\begin{proof}
Plugging in $M = (1,1)$ to \Cref{prp:Hecke_equation} gives
\begin{align}\label{eq:Hecke_equation_11}
\lambda(p^r) A_{\varpi_1}(1,1) = \sum\limits_{0\leq a, b \leq r/2} \lambda_{a,b}^{(r)} (p) A_{\varpi_1}(1,1) = \sum\limits_{0\leq c\leq d\leq r/2} \mf S^{(r)}(c,d) p^{2c+d-\frac{3r}{2}} A_{\varpi_1}(p^{d-c}, p^{r-2d}). 
\end{align}
We evaluate $\mf S^{(r)}(c,d)$ explicitly. We set $A_a := \bp p^c & a\\ & p^d\ep$, and partition the sum
\ba
\mf S^{(r)}(c,d) &= \sum\limits_{0\leq a < p^d} \mf S^{(r)}(c,d; a),\text{ where } \mf S^{(r)}(c,d;a) := \sum\limits_{\substack{\Gamma h_i\sbe S(p^r)\\ A(h_i) = A_a}} \theta(\hat x_i),
\ea
and $A(h_i)$ denotes the top left $2\times 2$ block of $h_i$. Using representatives in \eqref{eq:Sm_coset_rep}, we rewrite
\ba
\mf S^{(r)}(c,d;a) = \sum\limits_{\substack{\Gamma h_i\sbe S(p^r)\\ A(h_i) = A_a}} \theta(\hat x_i) = \sum\limits_{\substack{\Gamma h_i\sbe S(p^r)\\ A(h_i) = A_a}} e\rb{\frac{a}{p^c}+\frac{b_3}{p^{2d}}} = e\rb{\frac{a}{p^c}} \sum\limits_{\substack{\Gamma h_i\sbe S(p^r)\\ A(h_i) = A_a}} e\rb{\frac{b_3}{p^{2d}}}.
\ea
The condition $BD\equiv 0 \pmod{p^r}$ in \eqref{eq:Sm_coset_rep} says 
\begin{align}\label{eq:B_condition}
p^{-r}BD = \bp b_1 p^{-c} - ab_2 p^{-d-c} & b_2 p^{-d}\\ b_2 p^{-c} -ab_3 p^{-d-c} & b_3 p^{-d}\ep \in M_2(\Z). 
\end{align}
Note that the summation over $B$ depends only on $v_p(a)$. We partition the sum with respect to $v_p(a)$. For $v_p(a) \leq c-2$, we have
\ba
\sum\limits_{\substack{0\leq a < p^d\\ v_p(a)\leq c-2}} \mf S^{(r)}(c,d;a) = \sum\limits_{0 \leq v \leq c-2} \sum\limits_{\substack{0< a' < p^{d-v}\\ (a',p)=1}} e\rb{\frac{a'}{p^{c-v}}} \sum\limits_{\substack{\Gamma h_i\sbe S(p^r)\\ A(h_i) = A_{p^v}}} e\rb{\frac{b_3}{p^{2d}}} = 0.
\ea
For $v_p(a) = c-1$, we have $d\geq c \geq 1$, and
\ba
\sum\limits_{\substack{0\leq a < p^d\\ v_p(a)\leq c-1}} \mf S^{(r)}(c,d;a) = \sum\limits_{\substack{0< a' < p^{d-c+1}\\ (a',p)=1}} e\rb{\frac{a'}{p}} \sum\limits_{\substack{\Gamma h_i\sbe S(p^r)\\ A(h_i) = A_{p^{c-1}}}} e\rb{\frac{b_3}{p^{2d}}} = -p^{d-c} \sum\limits_{\substack{\Gamma h_i\sbe S(p^r)\\ A(h_i) = A_{p^{c-1}}}} e\rb{\frac{b_3}{p^{2d}}}.
\ea
The integrality conditions in \eqref{eq:B_condition} forces $p^{d+1} \mid b_3$, $p^d \mid b_2$, and $p^{d+1} \mid b_1 p^{d-c+1} + b_2$. Hence
\ba
\sum\limits_{\substack{0\leq a < p^d\\ v_p(a)\leq c-1}} \mf S^{(r)}(c,d;a) = -p^{d-c} \sum\limits_{\substack{0\leq b_1, b_2, b_3 < p^r\\ p^{d+1} \mid b_3, \; p^d \mid b_2\\ p^{d+1} \mid b_1 p^{d-c+1} + b_2}} e\rb{\frac{b_3}{p^{2d}}} = \begin{cases} -p^{3r-c-2d-1} & \text{ if } d=1,\\ 0 & \text{ otherwise.} \end{cases}
\ea
For $v_p(a) \geq c$, the integrality condition in \eqref{eq:B_condition} forces $p^d \mid b_2, b_3$, and $p^c \mid b_1$. Hence
\ba
\sum\limits_{\substack{0\leq a < p^d\\ v_p(a)\geq c}} \mf S^{(r)}(c,d;a) = p^{d-c} \sum\limits_{\substack{0\leq b_1, b_2, b_3 < p^r\\ p^d \mid b_2, b_3\\ p^c \mid b_1}} e\rb{\frac{b_3}{p^{2d}}}= \begin{cases} p^{3r-c-2d} & \text{ if } d=0,\\ 0 & \text{ otherwise.} \end{cases}
\ea
Hence we conclude 
\ba
\mf S^{(r)} (c,d) = \begin{cases} p^{3r} & \text{ if } (c,d) = (0,0),\\ -p^{3r-4} & \text{ if } (c,d) = (1,1),\\ 0 & \text{ otherwise.}\end{cases}
\ea
Putting this back into \eqref{eq:Hecke_equation_11} gives the statement.
\end{proof}

Hecke eigenvalues can also be expressed in terms of local Satake parameters $\alpha_p, \beta_p$ associated to $\pi$. Without loss of generality, assume $\vb{\alpha_p} \geq \vb{\beta_p} \geq 1$. Then up to some ordering we have $p^{\mu_\pi(p,1)} = \alpha_p$, $p^{\mu_\pi(p,2)} = \beta_p$, and $\sigma_\pi(p) = \mu_\pi(p,1)$. By an identity of Shimura \cite[Theorem 2]{Shimura1963}, we have 
\begin{align}\label{eq:Shimura}
{\small
\sum\limits_{r=0}^\infty \lambda(p^r) x^r = (1-p^2x^2)(1-p^{3/2}\alpha_p x)^{-1}(1-p^{3/2}\alpha_p^{-1} x)^{-1}(1-p^{3/2}\beta_p x)^{-1}(1-p^{3/2}\beta_p^{-1}x)^{-1}.}
\end{align}
For convenience, we define $\sigma_\pi^+(p) = \frac{3}{2}+\sigma_\pi(p)$. 
\begin{lem}\label{lem:eigenvalue_bound}
For a prime $p\nmid q$ and $r\geq 3$ we have
\ba
\max\limits_{0\leq j\leq 3} \vb{\lambda(p^{r-j})} \geq \frac{p^{r\sigma_\pi^+(p)}}{16}.
\ea
\end{lem}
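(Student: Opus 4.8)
The plan is to read off the asymptotics of $\lambda(p^r)$ from the generating function identity \eqref{eq:Shimura}, which expresses $\sum_r \lambda(p^r) x^r$ as a rational function whose poles are at $x = p^{-3/2}\alpha_p^{\pm 1}$ and $x = p^{-3/2}\beta_p^{\pm 1}$. The dominant term as $r \to \infty$ comes from the pole closest to the origin, namely $x = p^{-3/2}\alpha_p^{-1}$, since $|\alpha_p| \geq |\beta_p| \geq 1$ and $p^{\sigma_\pi(p)} = |\alpha_p|$, so that $|p^{-3/2}\alpha_p^{-1}| = p^{-\sigma_\pi^+(p)}$ is the smallest modulus among the four poles. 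A partial-fraction decomposition of the right-hand side of \eqref{eq:Shimura} then yields
\ba
\lambda(p^r) = c_1 (p^{3/2}\alpha_p)^r + c_2 (p^{3/2}\alpha_p^{-1})^r + c_3 (p^{3/2}\beta_p)^r + c_4 (p^{3/2}\beta_p^{-1})^r
\ea
for $r$ large (with a correction in low degree coming from the numerator factor $1 - p^2 x^2$, which only affects $r \leq 2$ and is irrelevant here since $r \geq 3$), where the $c_i$ are explicit in $\alpha_p, \beta_p$. The leading coefficient $c_2$ attached to $(p^{3/2}\alpha_p^{-1})^r$ — equivalently, the residue of the generating function at its smallest pole — is a bounded, explicitly computable rational expression in $\alpha_p, \beta_p$; crucially it is bounded \emph{away} from zero by an absolute constant, because after clearing the factor $(1 - p^{3/2}\alpha_p^{-1} x)$ and evaluating at $x = p^{-3/2}\alpha_p$ one gets something like $(1 - \alpha_p^{-2})(1 - \alpha_p^{-2})^{-1}$-type factors that do not degenerate as long as $|\alpha_p| \geq 1$. (One must take a small amount of care when $|\alpha_p|$ is very close to $1$, i.e. when $\pi$ is nearly tempered at $p$, and when $\alpha_p$ is close to $\pm\beta_p$ so that two poles nearly collide; in those regimes one should instead argue directly that the four quantities $\lambda(p^{r}), \dots, \lambda(p^{r-3})$ cannot all be small, by noting that a degree-$\leq 3$ recurrence — the one encoded by the denominator in \eqref{eq:Shimura} — with bounded coefficients cannot force four consecutive terms of a solution of size $\asymp p^{r\sigma_\pi^+(p)}$ to all be tiny.)

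The cleanest way to package this uniformly is via the recurrence itself. The denominator in \eqref{eq:Shimura} is $\prod (1 - p^{3/2}\gamma x)$ over $\gamma \in \{\alpha_p, \alpha_p^{-1}, \beta_p, \beta_p^{-1}\}$, a degree-$4$ polynomial in $x$ whose coefficients, after dividing out the leading power of $p$, are $p^{3/2 \cdot k}$ times elementary symmetric functions in the $\gamma$'s, which are bounded (the $\gamma$'s have modulus between $p^{-\sigma_\pi(p)}$... wait, between $1$ and $p^{\sigma_\pi(p)}$). So for $r \geq 5$, say, $\lambda(p^r)$ satisfies a four-term linear recurrence
\ba
\lambda(p^r) = \sum_{j=1}^4 d_j\, p^{3j/2}\, \lambda(p^{r-j}), \qquad |d_j| \ll 1.
\ea
Now suppose for contradiction that $|\lambda(p^{r-j})| < \tfrac{1}{16} p^{r\sigma_\pi^+(p)}$ for all $0 \leq j \leq 3$. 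The idea is to propagate this bound downward: writing $e_r := p^{-r\sigma_\pi^+(p)}\lambda(p^r)$, the recurrence becomes $e_r = \sum_j d_j p^{3j/2 - j\sigma_\pi^+(p)} e_{r-j} = \sum_j d_j p^{-j\sigma_\pi(p)} e_{r-j}$ — but this runs the wrong way. Instead invert: solve for the \emph{lowest} index, $e_{r-4} = d_4^{-1} p^{j\sigma_\pi(p)\text{-ish}}(e_r - \dots)$, and observe that $d_4 = \prod \gamma^{-1}$-type quantity has modulus... Hmm, this inversion costs a factor $p^{\sigma_\pi(p)}$ each step, which is bad.

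The better route is the partial fractions / residue computation done honestly, so let me commit to that as the main line, with the near-degenerate cases handled separately. So: \textbf{Step 1.} Perform the partial fraction decomposition of \eqref{eq:Shimura}, obtaining $\lambda(p^r) = \sum_{i} c_i \delta_i^r$ for $r \geq 3$ with $\delta_1 = p^{3/2}\alpha_p$ dominant-at-infinity... no wait, $p^{3/2}\alpha_p$ is the \emph{largest} root, so $\lambda(p^r) \sim c_1 (p^{3/2}\alpha_p)^r = c_1 p^{r\sigma_\pi^+(p)}$, and $c_1 = \operatorname{Res}$ is the quantity to lower-bound. \textbf{Step 2.} Show $|c_1| \gg 1$ absolutely: compute $c_1 = \lim_{x \to p^{-3/2}\alpha_p^{-1}}(1 - p^{3/2}\alpha_p x)\cdot[\text{RHS of }\eqref{eq:Shimura}]$, which equals $(1 - \alpha_p^{-2})\cdot \prod_{\gamma \neq \alpha_p}(1 - \gamma/\alpha_p)^{-1}$ up to the numerator contribution $(1 - p^{-1}\alpha_p^{-2})$; check this is $\gg 1$ when $|\alpha_p|$ is bounded away from $1$ and from $|\beta_p|^{\pm 1}$. \textbf{Step 3.} In the remaining regimes ($\alpha_p$ near $1$, or near $\pm\beta_p$, or $\beta_p$ near $1$ — finitely many coincidence loci), group the confluent terms and redo the local expansion, or simply invoke continuity: the finite linear combination $\sum_{j=0}^3 |\lambda(p^{r-j})|^2$ is, after normalizing by $p^{2r\sigma_\pi^+(p)}$, a continuous nonvanishing function of $(\alpha_p, \beta_p)$ on the relevant compact-after-projectivizing parameter region for each fixed large $r$, hence bounded below — but the constant $1/16$ and independence of $r$ require the explicit Step 2 computation, which remains valid by taking limits.

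\textbf{Main obstacle.} The essential difficulty is the \emph{uniformity} of the lower bound $1/16$: the residue $c_1$ genuinely degenerates (tends to $0$) as $\alpha_p \to 1$ or $\alpha_p \to \beta_p$, so one cannot lower-bound a single $\lambda(p^r)$ by $p^{r\sigma_\pi^+(p)}$ times an absolute constant. The four consecutive values $\lambda(p^r), \dots, \lambda(p^{r-3})$ are needed precisely to cover these confluent cases — when two exponentials $\delta_i^r$ nearly coincide, their combined contribution over four consecutive $r$ still cannot vanish (a Vandermonde-type nondegeneracy). The cleanest rigorous argument is therefore: after partial fractions, $(\lambda(p^{r}), \lambda(p^{r-1}), \lambda(p^{r-2}), \lambda(p^{r-3}))^\top = V \cdot (c_1\delta_1^{r-3}, \dots, c_4\delta_4^{r-3})^\top$ where $V$ is the $4\times 4$ Vandermonde in $\delta_1, \dots, \delta_4$; invert to isolate $c_1 \delta_1^{r-3}$, whose modulus is $\gg p^{(r-3)\sigma_\pi^+(p)} \cdot |c_1|$... which still has the same degeneracy. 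So in the confluent case one must instead use the \emph{confluent} Vandermonde (involving derivatives), and the combination $c_1 \delta_1^r + c_1' r \delta_1^r$ (Jordan block) has the property that its four consecutive values determine $c_1$ with bounded-below coefficients since $\delta_1$ is large. I expect the honest execution to require splitting into the cases "$\alpha_p/\beta_p$ bounded away from $1$" and "not", and in each, extracting the dominant contribution via an appropriate (possibly confluent) Vandermonde inversion whose entries, being governed by the \emph{large} quantities $p^{3/2}\alpha_p^{\pm1}$, have bounded-below inverse — this last point, that division by the large root saves rather than costs, is the key mechanism, and making it airtight across all parameter regimes is where the real work lies.
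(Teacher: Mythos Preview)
Your proposal correctly identifies the mechanism (the dominant exponential $(p^{3/2}\alpha_p)^r$ in the expansion of \eqref{eq:Shimura}) and the obstacle (uniformity as poles collide), but you never resolve the obstacle, and the Vandermonde/confluent-Vandermonde route you sketch is both incomplete and unnecessarily elaborate. The missing idea is this: instead of a full partial-fraction decomposition --- which forces you to control all four residues $c_i$, some of which genuinely degenerate --- multiply \eqref{eq:Shimura} through by only \emph{three} of the four denominator factors, namely $(1-p^{3/2}\alpha_p^{-1}x)(1-p^{3/2}\beta_p x)(1-p^{3/2}\beta_p^{-1}x)$. The right-hand side then becomes $(1-p^2x^2)\sum_r (p^{3/2}\alpha_p)^r x^r$, whose $x^r$-coefficient for $r\geq 3$ is exactly $p^{3r/2}(\alpha_p^r - p^{-1}\alpha_p^{r-2})$, of modulus $\geq \tfrac12 p^{r\sigma_\pi^+(p)}$ --- with \emph{no residue and no degeneracy whatsoever}. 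The left-hand side is a fixed linear combination of $\lambda(p^r),\lambda(p^{r-1}),\lambda(p^{r-2}),\lambda(p^{r-3})$ whose coefficients are $p^{3j/2}$ times elementary symmetric functions of the three subdominant roots $\alpha_p^{-1},\beta_p,\beta_p^{-1}$; since each of these has modulus at most $|\alpha_p|=p^{\sigma_\pi(p)}$, the $j$-th coefficient is $O(p^{j\sigma_\pi^+(p)})$ uniformly. Assuming all four $|\lambda(p^{r-j})|$ are small then gives an immediate contradiction.

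The point is that isolating the single factor $(1-p^{3/2}\alpha_p x)^{-1}$, rather than splitting all four, makes the leading coefficient equal to $1$ by construction and pushes all the ``cancellation'' into the cubic recurrence on the left, where the coefficients are automatically bounded because they involve only the smaller roots. This completely sidesteps the confluent-pole analysis you were bracing for; no case distinction on the proximity of $\alpha_p$ to $\beta_p^{\pm1}$ or to $1$ is needed.
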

\begin{proof}
We derive from \eqref{eq:Shimura} that
\ba
(1-p^{3/2}\alpha_p^{-1}x)(1-p^{3/2}\beta_px)(1-p^{3/2}\beta_p^{-1} x)\sum\limits_{r=0}^\infty \lambda(p^r) x^r = (1-p^2x^2) \sum\limits_{r=0}^\infty \big(p^{3/2} \alpha_p\big)^r x^r.
\ea
Comparing coefficients gives
\ba
&\lambda(p^r) - \lambda(p^{r-1})p^{3/2}\big(\alpha_p^{-1}+\beta_p+\beta_p^{-1}\big) + \lambda(p^{r-2}) p^3 (\alpha_p^{-1}\beta_p+\alpha_p^{-1}\beta_p^{-1}+1) + \lambda(p^{r-3}) p^{9/2} \alpha_p^{-1}\\
= &p^{3r/2} (\alpha_p^r - p^{-1} \alpha_p^{r-2}). 
\ea
Assume the contrary. Then the left hand side is bounded by
\ba
\frac{p^{r\sigma_\pi^+(p)}}{2} \leq p^{r\sigma_\pi^+(p)} - p^{2+(r-2)\sigma_\pi^+(p)} \leq p^{3r/2} \vb{\alpha_p^r - p^{-1} \alpha_p^{r-2}},
\ea
a contradiction.
\end{proof}

\begin{lem}\label{lem:Fourier_bound}
Let $\varpi_1 \in V_\pi$ be a cuspidal newform such that $A_{\varpi_1}(1,1) = 1$, $p\nmid q$ a prime, and $r_0\in \N_0$. Then the inequality
\ba
\vb{A_{\varpi_1}(1,p^r)} \geq \frac{p^{r\sigma_\pi(p)}}{32}
\ea
holds for some $r_0 \leq r \leq r_0+5$. 
\end{lem}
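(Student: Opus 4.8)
The plan is to read off the lemma from the two preceding results: \Cref{thm:eigenvalue_Fourier}, which inverts the relation between $\lambda(p^r,\pi)$ and the coefficients $A_{\varpi_1}(1,p^r)$, and \Cref{lem:eigenvalue_bound}, which says that among any four consecutive Hecke eigenvalues $\lambda(p^{r-j})$, $0\le j\le 3$, at least one has size $\ge p^{r\sigma_\pi^+(p)}/16$. The only work is to keep track of the window of exponents and of the constants.

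First I would apply \Cref{lem:eigenvalue_bound} with $r=r_0+5$; since $r_0\ge 0$ this satisfies the hypothesis $r\ge 3$. This yields an index $s\in\{r_0+2,r_0+3,r_0+4,r_0+5\}$ with
\ba
\vb{\lambda(p^s,\pi)}\ \ge\ \frac{p^{(r_0+5)\sigma_\pi^+(p)}}{16}\ \ge\ \frac{p^{s\sigma_\pi^+(p)}}{16},
\ea
the last step because $s\le r_0+5$ and $\sigma_\pi^+(p)=\tfrac32+\sigma_\pi(p)>0$. Since $s\ge 2$, \Cref{thm:eigenvalue_Fourier} gives $\lambda(p^s,\pi)=p^{3s/2}\bigl(A_{\varpi_1}(1,p^s)-p^{-1}A_{\varpi_1}(1,p^{s-2})\bigr)$, and dividing by $p^{3s/2}$ together with $\sigma_\pi^+(p)-\tfrac32=\sigma_\pi(p)$ gives
\ba
\Bigl|A_{\varpi_1}(1,p^s)-p^{-1}A_{\varpi_1}(1,p^{s-2})\Bigr|\ \ge\ \frac{p^{s\sigma_\pi(p)}}{16}.
\ea

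Next I would apply the triangle inequality: $\vb{A_{\varpi_1}(1,p^s)}+p^{-1}\vb{A_{\varpi_1}(1,p^{s-2})}\ge p^{s\sigma_\pi(p)}/16$, so one of the two summands is at least $p^{s\sigma_\pi(p)}/32$. If the first is, then $r:=s$ satisfies $\vb{A_{\varpi_1}(1,p^r)}\ge p^{r\sigma_\pi(p)}/32$. If the second is, then $\vb{A_{\varpi_1}(1,p^{s-2})}\ge p^{1+s\sigma_\pi(p)}/32\ge p^{(s-2)\sigma_\pi(p)}/32$, using $1+2\sigma_\pi(p)\ge 0$, so $r:=s-2$ works. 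In both cases $r\in\{s-2,s\}\subseteq\{r_0,r_0+1,\dots,r_0+5\}$, which is the claim.

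I do not anticipate any genuine obstacle: the statement is a clean synthesis, and the only points requiring care are (i) that the recursion in \Cref{thm:eigenvalue_Fourier} shifts the index by at most $2$, (ii) that the triangle inequality costs a factor $2$, and (iii) that the window of $4$ from \Cref{lem:eigenvalue_bound} then widens to the six exponents $r_0,\dots,r_0+5$ with constant $32=2\cdot 16$; monotonicity of $t\mapsto p^{t\sigma_\pi^+(p)}$ and nonnegativity of $\sigma_\pi(p)$ take care of the exponent comparisons.
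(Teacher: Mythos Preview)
Your proof is correct and follows essentially the same approach as the paper's own proof: apply \Cref{lem:eigenvalue_bound} to locate a large Hecke eigenvalue in the window $\{r_0+2,\dots,r_0+5\}$, feed it into the two-term recursion of \Cref{thm:eigenvalue_Fourier}, and split via the triangle inequality at the cost of a factor $2$. Your write-up is in fact more explicit than the paper's, which compresses the monotonicity step $p^{(r_0+5)\sigma_\pi^+(p)}\ge p^{s\sigma_\pi^+(p)}$ and the final case analysis into ``and the statement follows.''
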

\begin{proof}
By \Cref{lem:eigenvalue_bound}, we have
\ba
\vb{\lambda(p^r)} \geq \frac{p^{r\sigma_\pi^+(p)}}{16}
\ea
for some $r_0+2 \leq r \leq r_0+5$. By \Cref{thm:eigenvalue_Fourier}, we have
\ba
p^{3r/2}\rb{\vb{A_{\varpi_1}(1, p^r)} + p^{-1} \vb{A_{\varpi_1}(1, p^{r-2})}} \geq \frac{p^{r\sigma_\pi^+(p)}}{16},
\ea
and the statement follows.
\end{proof}

\section{Poincar\'e series and the Kuznetsov formula}

Let $E: \R_+^2 \to \C$ be a fixed function with compact support, $X \in \R_+^2$ a ``parameter''. We define
\ba
E^{(X)} (y_1, y_2) = E(X_1y_1, X_2y_2), 
\ea
and a right $K$-invariant function $F^{(X)}: \Sp(4,\R) \to \C$ by 
\begin{align}\label{eq:F_def}
F^{(X)} (xy) = \theta(x) E^{(X)}(\y(y))
\end{align}
for $x\in U(\R)$ and $y\in T(\R_+)$, where $\theta = \theta_{(1,1)}$ is as in \eqref{eq:character_def}. For $N\in\N^2$, we define the Poincar\'e series of level $q$ to be
\ba
P_N^{(X)} (xy) = \sum\limits_{\gamma\in U(\Z) \bs \Gamma_0(q)} F^{(X)} (\iota(N)\gamma xy). 
\ea
Note that $F^{(X)}(\iota (N)xy) = \theta_N(x) E^{(X)} (N \y(y)) = \theta_N(x) E(XN\y(y))$. For $w\in W$, let $G_w = UwTU$, and $\Gamma_w := U(\Z) \cap w^{-1} U(\Z)^\top w$. Let $R_w(q)$ be a complete system of coset representatives for $P_0\cap \Gamma_0(q) \bs \Gamma_0(q) \cap G_w/ \Gamma_w$. 

We compute the Fourier coefficients of the Poincar\'e series:
\ba
&\int_{U(\Z)\bs U(\R)} P_M^{(X)}(x,y) \ol{\theta_N(x)} dx\\
= &\sum\limits_{\gamma \in P_0\cap \Gamma_0(q)\bs \Gamma_0(q)} \int_{U(\Z)\bs U(\R)} F^{(X)}(\iota(M)\gamma xy) \ol{\theta_N(x)} dx\\
= & \sum\limits_{w\in W} \sum\limits_{\gamma\in R_w(q)} \sum\limits_{\ell\in\Gamma_w} \int_{U(\Z)\bs U(\R)} F^{(X)} (\iota(M)\gamma \ell xy) \ol{\theta_N(x)} dx\\
= & \sum\limits_{w\in W} \sum\limits_{c\in \N^2} \Kl_{q,w}(c,M,N) \int_{U_w(\R)} F^{(X)} (\iota(M)c^*wxy) \ol{\theta_N(x)} dx.
\ea
For fixed $y$, it follows from \Cref{lem:1} and $E$ having compact support that the $c$-sum runs over a finite set, and $U_w(\R)$ runs over a compact domain. In particular, the right hand side is absolutely convergent. 

Now let $\varpi$ be an automorphic form, not necessarily cuspidal, in the spectrum of $L^2(\Gamma_0(q)\bs \Sp(4,\R)/K)$. By unfolding, \eqref{eq:Fourier_in_Whittaker} and a change of variables $\iota(N)y \mapsto y$, we obtain
\ba
\pb{\varpi, P_N^{(X)}} = \int_{T(\R_+)} \int_{U(\Z)\bs U(\R)} \varpi(xy) \theta_N(-x) \ol{E^{(X)}(N\cdot \y(y))} dx d^*y = N^\eta A_\varpi (N) \pb{W_\mu, E^{(X)}}.
\ea
By Parseval, we obtain
\ba
\pb{P_M^{(X)}, P_N^{(X)}} = M^\eta N^\eta \int_{(q)} \ol{A_\varpi(M)} A_\varpi(N) \vb{\pb{W_\mu, E^{(X)}}}^2 d\varpi. 
\ea
Meanwhile, unfolding the inner product directly gives
\ba
\pb{P_M^{(X)}, P_N^{(X)}} &= \int_{T(\R_+)} \int_{U(\Z)\bs U(\R)} P_M^{(X)} \theta_N(-x) \ol{E^{(X)}(N\cdot \y(y))} dx d^*y\\
&= \sum\limits_{w\in W} \sum\limits_{c\in\N^2} \Kl_{q,w}(c,M,N) \int_{T(\R^+)} \int_{U_w(\R)} F^{(X)}(\iota(M) c^* wxy) \theta_N(-x) \ol{E(XN \cdot \y(y))} dx d^*y.
\ea
Now define
\begin{align}\label{eq:A_def}
A = \iota(XM) c^* w \iota(XN)^{-1} w^{-1} = \iota((XM) \cdot \tensor[^w]{(XN)}{}) c^* \in T(\R_+).
\end{align}
Then $\y(A)^\eta c_1 c_2 = \rb{ (XM) \cdot \tensor[^w]{(XN)}{}}^\eta$. By change of variables $\iota(XN)y \mapsto y$, $\iota(XN) x \iota(XN)^{-1} \mapsto x$, we can express $\pb{P_M^{(X)}, P_N^{(X)}}$ as
\ba
\sum\limits_{w\in W} \sum\limits_{c\in \N^2} \Kl_{q,w}(c,M,N) \frac{(XM)^\eta (XN)^\eta}{c_1c_2 \y(A)^\eta} \int_{T(\R_+)} \int_{U_w(\R)} F^{(X)}(\iota(X)^{-1} A wxy) \theta_{X^{-1}}(-x) \ol{E(\y(y))} dx d^*y.
\ea
We then conclude a Kuznetsov-type trace formula.
\begin{lem}\label{lem:Kuznetsov}
Let $M,N \in \N^2$, $X\in \R_+^2$, $E$ a function on $\R_+^2$ with compact support, and define $F^{(X)}$ as in \eqref{eq:F_def}. Then
\begin{align}
&\int_{(q)} \ol{A_\varpi(M)} A_\varpi(N) \vb{\pb{W_\mu, E^{(X)}}}^2 d\varpi\\
= &\sum\limits_{w\in W} \sum\limits_{c\in \N^2} \Kl_{q,w}(c,M,N) \frac{X^{2\eta}}{c_1c_2 \y(A)^\eta} \int_{T(\R_+)} \int_{U_w(\R)} F^{(X)}(\iota(X)^{-1} A wxy) \theta_{X^{-1}}(-x) \ol{E(\y(y))} dx d^*y, \nonumber
\end{align}
with $A$ as in \eqref{eq:A_def}. 
\end{lem}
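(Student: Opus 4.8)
The plan is to derive the stated identity by equating the two evaluations of $\pb{P_M^{(X)}, P_N^{(X)}}$ already obtained above --- one via Parseval against the spectral decomposition, one by direct unfolding into Kloosterman sums --- and then cancelling the common factor $M^\eta N^\eta$. Two analytic points must be secured first: that $P_N^{(X)}$ genuinely lies in $L^2(\Gamma_0(q)\bs\Sp(4,\R)/K)$, so that Parseval and the spectral expansion $\int_{(q)} d\varpi$ apply to it; and that the interchanges of the finite $w$-sum, the $c$-sum and the archimedean integral on the geometric side are legitimate.

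First I would verify square-integrability. Since $E$ has compact support, $F^{(X)}$ is supported on the set of $g$ whose Iwasawa $\y$-coordinate lies in a fixed compact subset of $\R_+^2$. For such $g$, \Cref{lem:1}, applied cell by cell to the Bruhat decomposition of $\Gamma_0(q)\cap G_w$, forces $c=(c_1,c_2)$ into a finite set and confines the relevant Bruhat coordinates to a compact region; hence only finitely many cosets $\gamma\in U(\Z)\bs\Gamma_0(q)$ contribute to $P_N^{(X)}(xy)$ at any given point. Thus $P_N^{(X)}$ is a finite sum of $\Gamma_0(q)$-translates of the smooth, compactly supported function $F^{(X)}$, hence bounded with compact support modulo $\Gamma_0(q)$; in particular it is square-integrable and $K$-invariant, so its spectral expansion converges.

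With this in hand, the spectral side is immediate: the unfolding identity $\pb{\varpi, P_N^{(X)}} = N^\eta A_\varpi(N)\pb{W_\mu, E^{(X)}}$ displayed above, combined with Parseval, gives $\pb{P_M^{(X)}, P_N^{(X)}} = M^\eta N^\eta \int_{(q)} \ol{A_\varpi(M)}A_\varpi(N)\vb{\pb{W_\mu, E^{(X)}}}^2 d\varpi$. For the geometric side I would unfold $\pb{P_M^{(X)}, P_N^{(X)}}$ directly against the Fourier coefficient of $P_M^{(X)}$, which produces for each $w\in W$ and $c\in\N^2$ the Kloosterman sum $\Kl_{q,w}(c,M,N)$ weighted by $\int_{T(\R_+)}\int_{U_w(\R)} F^{(X)}(\iota(M)c^* wxy)\,\theta_N(-x)\,\ol{E(\y(y))}\,dx\,d^*y$; the term-by-term interchange is justified exactly as in the previous paragraph. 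Then the change of variables $\iota(XN)y\mapsto y$ and $\iota(XN)x\iota(XN)^{-1}\mapsto x$ --- with the resulting $x$-Jacobian supplied by \Cref{lem:2}, the relation $\y(A)^\eta c_1 c_2 = \big((XM)\cdot\tensor[^w]{(XN)}{}\big)^\eta$ coming from \eqref{eq:A_def}, and the identity $(XM)^\eta(XN)^\eta = X^{2\eta}M^\eta N^\eta$ --- rewrites the geometric side as $M^\eta N^\eta$ times the right-hand side of the lemma. Equating the two and cancelling $M^\eta N^\eta$ finishes the argument.

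The main obstacle is entirely on the spectral side: one must confirm that the unfolding identity $\pb{\varpi, P_N^{(X)}} = N^\eta A_\varpi(N)\pb{W_\mu, E^{(X)}}$ and Parseval's formula extend across the full spectrum of $L^2(\Gamma_0(q)\bs\Sp(4,\R)/K)$, including the Eisenstein contributions, where $A_\varpi(N)$ and $W_\mu$ must be interpreted as the Fourier--Whittaker data of the Eisenstein series and the continuous-spectrum integral requires the usual care with truncation and meromorphic continuation. By contrast, the geometric side is, once \Cref{lem:1} is invoked, a finite sum of absolutely convergent integrals, so the remaining work there is purely bookkeeping.
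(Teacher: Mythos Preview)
Your proposal is correct and follows essentially the same route as the paper: the lemma is stated as the conclusion of exactly the computation preceding it, namely evaluating $\pb{P_M^{(X)},P_N^{(X)}}$ spectrally via unfolding and Parseval, evaluating it geometrically via the Bruhat decomposition and the change of variables $\iota(XN)y\mapsto y$, $\iota(XN)x\iota(XN)^{-1}\mapsto x$, and then cancelling $M^\eta N^\eta$. You are in fact more explicit than the paper about the analytic justifications (square-integrability of $P_N^{(X)}$, the role of \Cref{lem:2} in the Jacobian, and the need to interpret $A_\varpi$ and $W_\mu$ across the continuous spectrum), all of which the paper leaves implicit.
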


\section{Proof of theorems}\label{section:proofs}

We establish the following proposition, from which the other theorems are proved.

\begin{prp}\label{prp:AZ}
Keep the notations as above. Let $m\in\N$ be coprime to $q$ and $Z\geq 1$. Then
\ba
\int_{(q)} \vb{A_\varpi(1,m)}^2 Z^{2\sigma_\pi(\infty)} \delta_{\lambda_\varpi \in I} d\varpi \ll_{I,\varepsilon} q^\varepsilon
\ea
uniformly in $mZ\ll q^2$ for a sufficiently small implied constant depending on $I$.
\end{prp}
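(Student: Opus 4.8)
The plan is to apply the Kuznetsov formula of \Cref{lem:Kuznetsov} with $M = N = (1,m)$, and a test function built from \Cref{lem:5} so that the archimedean weight $\vb{\pb{W_\mu, E^{(X)}}}^2$ captures the factor $Z^{2\sigma_\pi(\infty)}$ and localises the Laplace spectrum to the interval $I$. Concretely, I would take $X = (1,Z)$ and sum over the finite collection $E_1, \dots, E_r$ furnished by \Cref{lem:5}, so that $\sum_j \vb{\pb{W_\mu, E_j^{(1,Z)}}}^2 \gg_I Z^{3 + 2\sigma_\pi(\infty)}$ on the relevant part of the spectrum, while remaining uniformly bounded; the restriction $\delta_{\lambda_\varpi \in I}$ is handled because the supports of the $E_j$ are confined to a fixed compact set depending only on $I$ (one may also throw in a mild spectral localiser, but since $I$ is fixed this is a harmless bounded factor). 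After dividing by $Z^3$, the left-hand side of \Cref{lem:Kuznetsov} dominates $\int_{(q)} \vb{A_\varpi(1,m)}^2 Z^{2\sigma_\pi(\infty)} \delta_{\lambda_\varpi \in I}\, d\varpi$ up to the constant, so it suffices to bound the geometric side.

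Next I would analyse the geometric side term by term over the Weyl group. The identity contribution $w = \id$ gives the ``diagonal'', which after the normalisations contributes $O(q^\varepsilon)$ — this is where the $|\mc F|$-independent bound comes from, using that $c^*$ is trivial and the integral over $U_{\id}$ is a single bounded integral against $E$. For the three non-trivial Weyl elements $s_\alpha s_\beta s_\alpha$, $s_\beta s_\alpha s_\beta$, $w_0$ that can support a nonzero Kloosterman sum, I would use the congruence constraints \eqref{eq:Kl_nonempty}, \eqref{eq:Kl_q_defined} on the moduli $c = (c_1, c_2)$ together with the support restrictions coming from \Cref{lem:1}: since $E$ has compact support, $\y(A)$ is confined to a fixed compact set, forcing $c_1 \ll (ZmZ)^{O(1)}$-type bounds, sharpened by the explicit relation $\y(A)^\eta c_1 c_2 = ((XM)\cdot {}^w(XN))^\eta$ and $\y(c^*)^\eta = (c_1 c_2)^{-1}$. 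Combining $q \mid c_1$ (and $q^2 \mid c_2$ for the last two elements) with $c_1 c_2 \ll mZ \ll q^2$, the only surviving moduli are exactly $c = (q,q)$ for $s_\alpha s_\beta s_\alpha$ and $c = (q, q^2)$ for $s_\beta s_\alpha s_\beta$ and $w_0$ — and possibly $c = (q, q^3)$ for $w_0$ if $mZ$ is allowed up to $q^2$ with the right archimedean factor. But the evaluations carried out in \Cref{section:Kl_eval} show that every one of these Kloosterman sums vanishes: $\Kl_{q, s_\alpha s_\beta s_\alpha}((q,q), M, N) = 0$, $\Kl_{q, s_\beta s_\alpha s_\beta}((q,q^2), M, N) = q^2$ — wait, that one does \emph{not} vanish, so I would need the extra cancellation or a size estimate; here I would use that its contribution carries a prefactor $\frac{X^{2\eta}}{c_1 c_2 \y(A)^\eta}$ with $c_1 c_2 = q^3$, and $X^{2\eta}/\y(A)^\eta \ll (mZ)^{O(1)} \ll q^{O(1)}$, and weigh whether $q^2 / q^3$ times the admissible range of $m$ still gives $O(q^\varepsilon)$; and $\Kl_{q, w_0}((q,q^2), M, N) = \Kl_{q, w_0}((q, q^3), M, N) = 0$ kills the long element entirely.

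So the mechanism is: in the range $mZ \ll q^2$ the off-diagonal moduli are pinned down to a tiny explicit list by the level-$q$ divisibility conditions, and the Kloosterman sums for those moduli are then either identically zero (by the computations in \Cref{section:Kl_eval}) or small enough after accounting for the normalising factors, leaving only the diagonal term $O(q^\varepsilon)$. I expect the main obstacle to be precisely the bookkeeping on the surviving moduli and the non-vanishing sum $\Kl_{q, s_\beta s_\alpha s_\beta}((q,q^2), M, N) = q^2$: one must check that the archimedean prefactor $\frac{X^{2\eta}}{c_1 c_2 \y(A)^\eta}$ together with the transform integral $\int_{T(\R_+)}\int_{U_w(\R)} F^{(X)}(\iota(X)^{-1} A wxy)\, \theta_{X^{-1}}(-x)\, \overline{E(\y(y))}\, dx\, d^*y$ — whose support and size are controlled by \Cref{lem:1}, \Cref{lem:2}, and \Cref{lem:3} (the volume bound $\ll (B_1 B_2)^{1+\varepsilon}$ for $U_{s_\beta s_\alpha s_\beta}$) — conspire to keep this term $\ll q^{\varepsilon}$ in the stated range, rather than $\ll q^{\varepsilon} \cdot q^{\text{positive}}$. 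Once that estimate is secured, assembling the pieces and dividing through by $Z^3$ yields the proposition; the coprimality of $m$ with $q$ is used throughout to invoke \eqref{eq:Kl_nonempty} and to ensure $M, N$ have entries prime to $q$.
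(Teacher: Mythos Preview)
Your approach is correct and matches the paper's proof essentially step for step: the same choice $X=(1,Z)$, $M=N=(1,m)$, the same use of \Cref{lem:5} for the archimedean amplifier, the same reduction via \eqref{eq:Kl_nonempty}--\eqref{eq:Kl_q_defined} and the evaluations of \Cref{section:Kl_eval} to the single surviving term $w=s_\beta s_\alpha s_\beta$, and the same endgame via \Cref{lem:1} and \Cref{lem:3}. Two small bookkeeping points to tighten: the correct support constraints are $c_1\ll mZ$, $c_2\ll (mZ)^2$ (not $c_1c_2\ll mZ$), and the surviving moduli are those whose $q$-part is $(q,q^2)$ with an additional sum over prime-to-$q$ parts $c'_1\ll mZ/q$ (handled by \eqref{eq:Kl_mult}, \eqref{eq:Kl_cprime_bound}); with these in place your prefactor-plus-volume estimate gives exactly $\sum_{c'_1\ll mZ/q} Z^3 \y(A)^{\eta\varepsilon}/q \ll Z^3 q^\varepsilon$, and dividing by $Z^3$ finishes.
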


\begin{proof}
We take $X=(1,Z)$, $M=N=(1,m)$, and apply \Cref{lem:Kuznetsov}. By \Cref{lem:5}, there is a finite set of compactly supported functions $E_j$ such that
\begin{align}\label{eq:WEj_bound}
Z^{2\eta_2+2\sigma_\pi(\infty)} \delta_{\lambda_\varpi\in I} \ll_I \sum\limits_j \vb{\pb{W_\mu, E_j^{(X)}}}^2.
\end{align}
Now we consider the arithmetic side of the Kuznetsov formula for a fixed $E^{(X)} = E_j^{(X)}$. It suffices to consider the Weyl elements $w\in W$ for which the Kloosterman sum $\Kl_{q,w}(c,M,N)$ does not vanish, namely, $w=\id, s_\alpha s_\beta s_\alpha, s_\beta s_\alpha s_\beta, w_0$. 

For $w=\id$, we have $c_1 = c_2 = 1$, and hence the contribution is $O(Z^{2\eta_2}) = \mc O(Z^3)$. 

Now let $w\in\cb{s_\alpha s_\beta s_\alpha, s_\beta s_\alpha s_\beta, w_0}$. Apply \Cref{lem:1} with $B = (XM)\cdot \tensor[^w]{(XN)}{}$. Concretely, we set
\ba
(B_1, B_2) = \begin{cases} (mZ,1) & \text{ if } w = s_\alpha s_\beta s_\alpha,\\ (1, (mZ)^2) & \text{ if } w = s_\beta s_\alpha s_\beta, w_0. \end{cases}
\ea
Then we obtain
\ba
c_1 \ll B_1 B_2^{1/2} &= mZ, & c_2 \ll B_1B_2 = \begin{cases} mZ & \text{ if } w=s_\alpha s_\beta s_\alpha,\\ (mZ)^2 & \text{ if } w=s_\beta s_\alpha s_\beta, w_0.\end{cases}
\ea
We assume $mZ\ll q^2$ with a sufficiently small implied constant, such that 
\begin{align}\label{eq:c_condition}
c_1, c_2 < q^2 \text{ for } &w=s_\alpha s_\beta s_\alpha, & \text{ and } c_1< q^2, c_2< q^4 \text{ for } &w=s_\beta s_\alpha s_\beta, w_0
\end{align}
always hold. Now we consider the Kloosterman sums
\ba
\Kl_{q,w}(c, M, N) = \sum\limits_{xc^*wx'\in U(\Z) \bs G_w(\Q)\cap \Gamma_0(q) / U_w(\Z)} \theta_M(x) \theta_N(x'),
\ea
where the entries of $M = (M_1, M_2)$ and $N = (N_1, N_2)$ are coprime to $q$. The Kloosterman sums are nonzero only when \eqref{eq:Kl_nonempty} and \eqref{eq:Kl_q_defined} are satisfied. By \eqref{eq:Kl_mult}, \eqref{eq:Kl_cprime_bound} and \eqref{eq:c_condition}, the problem reduces to computing the Kloosterman sums 
\ba
&\Kl_{q, s_\alpha s_\beta s_\alpha} \rb{(q,q), M, N}, & &\Kl_{q, s_\beta s_\alpha s_\beta} \rb{(q, q^2), M, N},\\
&\Kl_{q, w_0}\rb{(q,q^2), M, N}, & &\Kl_{q, w_0}\rb{(q,q^3), M, N}.
\ea
From \Cref{section:Kl_eval}, we see that only $\Kl_{q,s_\beta s_\alpha s_\beta}\rb{(q,q^2),M,N}$ does not vanish. So only $w=s_\beta s_\alpha s_\beta$ contributes.

The next step is to estimate for $w=s_\beta s_\alpha s_\beta$ the integral
\ba
&\vb{\int_{T(\R_+)}\int_{U_w(\R)} F^{(X)} (\iota(X)^{-1} Awxy) \theta_{X^{-1}}(-x) \ol{E(\y(y))} dx d^*y}\\
\leq &\int_{T(\R_+)} \int_{U_w(\R)} \vb{E(\y(Awxy)) E(\y(y))}.
\ea
This integral is bounded by the size of the set of $x\in U_w(\R)$ such that $\y(Awxy)$ lies in the support of $E$. Using \Cref{lem:1} and \Cref{lem:3}, we deduce that
\ba
&\vb{\int_{T(\R_+)}\int_{U_w(\R)} F^{(X)} (\iota(X)^{-1} Awxy) \theta_{X^{-1}}(-x) \ol{E(\y(y))} dx d^*y}\\
\ll_E &\vol\cbm{x\in U_w(\R)}{\Delta_1(wx)\ll_E \y(A)_1\y(A)_2^{1/2}, \; \Delta_2(wx)\ll_E \y(A)_1\y(A)_2} \ll_E \y(A)^{\eta(1+\varepsilon)}.
\ea
So the contribution from $w=s_\beta s_\alpha s_\beta$ is given by
\ba
&\sum\limits_{c\in \N^2} \Kl_{q,w}(c,M,N) \frac{X^{2\eta}}{c_1c_2 \y(A)^\eta} \int_{T(\R_+)} \int_{U_w(\R)} F^{(X)}(\iota(X)^{-1} A wxy) \theta_{X^{-1}}(-x) \ol{E(\y(y))} dx d^*y\\
\ll_E &\sum\limits_{c'_1\ll mZ/q} \frac{Z^{2\eta_2} \y(A)^{\varepsilon}}{q} \ll Z^3 q^\varepsilon.
\ea
Combining the estimates with \eqref{eq:WEj_bound}, we obtain
\ba
\int_{(q)} \vb{A_\varpi(1,m)}^2 Z^{3+2\sigma_\pi(\infty)} \delta_{\lambda_\varpi\in I} d\varpi \ll_I \int_{(q)} \vb{A_\varpi(1,m)}^2 \vb{\pb{W_\mu, E^{(X)}}}^2 d\varpi \ll_{\varepsilon} Z^3 q^\varepsilon. 
\ea
Dividing both sides by $Z^3$ yields the theorem. 
\end{proof}

\begin{proof}[Proof of \Cref{thm:lambdaZ}]
It follows easily from \Cref{prp:AZ}, \Cref{thm:eigenvalue_Fourier} and the estimate \eqref{eq:A11_estimate} that
\ba
\sum\limits_{\pi\in\mc F_I(q)} \vb{\lambda'(m,\pi)}^2 Z^{2\sigma_\pi(\infty)} \ll_{\varepsilon} q^{3+\varepsilon} \int_{(q)} \vb{A_\varpi(1,m)}^2 Z^{2\sigma_\pi(\infty)} \delta_{\lambda_\varpi \in I} \ll_{I,\varepsilon} q^{3+\varepsilon}.
\ea
\end{proof}

\begin{proof}[Proof of \Cref{thm:alpham}]
This is just a simple variation of the proofs above. Again we have
\ba
\sum\limits_{\pi \in\mc F_I(q)} \Big|\sum\limits_{\substack{m\leq x\\ (m,q)=1}} \alpha(m) \lambda'(m,\pi)\Big|^2 \ll_{\varepsilon} q^{3+\varepsilon} \int_{(q)} \Big|\sum\limits_{\substack{m\leq x\\ (m,q)=1}} \alpha(m) A_\varpi(M)\Big|^2 \delta_{\lambda_\varpi\in I} d\varpi
\ea
\ba
= q^{3+\varepsilon} \sum\limits_{\substack{m_1, m_2\leq x\\ (m_1m_2, q) = 1}} \alpha(m_1) \ol{\alpha(m_2)} \int_{(q)} A_\varpi(M_1) \ol{A_\varpi(M_2)} \delta_{\lambda_\varpi \in I} d\varpi,
\ea
where $M = (1,m)$, $M_1 = (1, m_1)$, $M_2 = (1,m_2)$. Now we apply \Cref{lem:Kuznetsov} and evaluate the Kloosterman sums on the arithmetic side. For $w\neq\id$, apply \Cref{lem:1} with $B = M_1 \cdot \tensor[^w]{M}{_2}$. We get
\ba
c_1&\ll (m_1m_2)^{1/2}\leq x, & c_2&\ll m_1\leq x & \text{ for } w&=s_\alpha s_\beta s_\alpha,\\
c_1&\ll (m_1m_2)^{1/2}\leq x, & c_2&\ll m_1m_2\leq x^2 & \text{ for } w&= s_\beta s_\alpha s_\beta, w_0.
\ea
Note that when $x\ll q$ with a sufficiently small implied constant, the condition $(m,q)=1$ is void, and we deduce from \eqref{eq:Kl_nonempty} that the Kloosterman sums $\Kl_{q,w}(c,M,N)$ are empty for $w\neq \id$. Hence only the trivial Weyl element contributes, and we obtain the desired bound.
\end{proof}

\begin{proof}[Proof of \Cref{cor:Lvalue}]
Observe that for $\pi \in \mc F_I(q)$ an approximate functional equation has length $q^{1/2}$ (see \cite[Section 5]{IK2004}). So, for all but $O(1)$ cuspidal representations $\pi \in\mc F_I(q)$ (and $\varepsilon<1/2$) we have
\ba
\vb{L(1/2+it, \pi)}^2 \ll_{I,t,\varepsilon} q^{\varepsilon} \sum\limits_{2^j = M\leq q^{1/2+\varepsilon}} \frac{1}{M} \Big| \sum\limits_{M\leq m\leq 2M} \lambda'_\pi(m) \Big|^2.
\ea
The statement then follows from \Cref{thm:alpham}.
\end{proof}

\begin{proof}[Proof of \Cref{thm:density_thm}]
We first assume $v=p\neq q$ is a finite place. We choose $\nu_0$ maximal such that $p^{\nu_0} \ll q^2$ with an implied constant that is admissible to \Cref{prp:AZ}. Then by \Cref{lem:Fourier_bound} and the estimate \eqref{eq:A11_estimate}, there exists $\nu_0-5 \leq \nu_\pi \leq \nu_0$ such that
\ba
\vb{A_\varpi(1, p^{\nu_\pi})}^2 \gg q^{-3-\varepsilon} p^{2\nu_\pi \sigma_\pi(p)}. 
\ea
Note that $p^{\nu_\pi} \asymp q^2$. We apply \Cref{prp:AZ} with $m=p^{\nu_\pi}$, $Z=1$, and conclude that
\ba
N_p(\sigma, \mc F_I(q)) \leq \sum\limits_{\pi\in \mc F_I(q)} \frac{p^{2\nu_\pi\sigma_\pi(p)}}{p^{2\nu_\pi\sigma}} \ll q^{3-4\sigma+\varepsilon} \int_{(q)} \sum\limits_{\nu_0-5\leq \nu \leq \nu_0}\vb{A_\varpi(1,p^\nu)}^2 \delta_{\lambda_{\varpi\in I}} \ll_{I,\varepsilon} q^{3-4\sigma+\varepsilon}. 
\ea

For $v=\infty$, we use the estimate \eqref{eq:A11_estimate}, apply \Cref{prp:AZ} with $m=1$, $Z\ll q^2$, and conclude that
\ba
N_\infty(\sigma, \mc F_I(q)) \leq \sum\limits_{\pi \in \mc F_I(q)} Z^{2\sigma_\pi(\infty)-2\sigma} \ll q^{3-4\sigma+\varepsilon} \int_{(q)} \vb{A_\varpi(1,1)}^2 Z^{2\sigma_\pi(\infty)} \ll_{I,\varepsilon} q^{3-4\sigma+\varepsilon}.
\ea
\end{proof}

\section{Appendix: Computation of Fourier coefficients}

In this appendix, we outline an algorithm for computing arbitrary Fourier coefficients of a cuspidal newform $\varpi_1 \in V_\pi$ with $A_{\varpi_1}(1,1) = 1$. For this purpose, it suffices to compute the actions of $T(p)$ and $T_{0,1}^{(2)}(p)$, which generate the Hecke algebra. By \Cref{prp:Hecke_equation}, we compute 
{\small
\begin{align}\label{eq:Tp_formula}
\lambda(p,\pi) A_{\varpi_1}(M_1, M_2) = p^{3/2}\big(A_{\varpi_1}(M_1, pM_2)\underbrace{+A_{\varpi_1}(p^{-1}M_1,pM_2)}_{\text{if } p\mid M_1} \underbrace{+A_{\varpi_1}(pM_1, p^{-1}M_2) + A_{\varpi_1}(M_1, p^{-1}M_2)}_{\text{if } p\mid M_2}\big),
\end{align}
}
and if $p\nmid M_2$,
{\small
\begin{align}\label{eq:T012p_formula}
\big(\lambda_{0,1}^{(2)}(p,\pi)+1\big) A_{\varpi_1}(M_1, M_2) = p^2\big(A_{\varpi_1}(pM_1, M_2)\underbrace{+A_{\varpi_1}(p^{-1}M_1, p^2M_2) + A_{\varpi_1}(p^{-1}M_1, M_2)}_{\text{if } p\mid M_1}\big).
\end{align}
}

We proceed to show how the Fourier coefficients $A_{\varpi_1}(p^{k_1}, p^{k_2})$ are obtained. Starting from $A_{\varpi_1}(1,1) = 1$, we apply \eqref{eq:Tp_formula} and \eqref{eq:T012p_formula} with $M=(1,1)$ and solve the coefficients
\ba
A_{\varpi_1}(p,1) &= p^{-2} \rb{\lambda_{0,1}^{(2)}(p)+1}, & A_{\varpi_1}(1,p) = p^{-3/2} \lambda(p).
\ea
Inductively, suppose the Fourier coefficients $A_{\varpi_1}(p^{k_1}, p^{k_2})$ are known for all $k_1+k_2 \leq r$. For $0\leq k\leq r$, applying \eqref{eq:Tp_formula} with $M=(p^k, p^{r-k})$ yields the coefficient $A_{\varpi_1}(p^k, p^{r-k+1})$. Then, applying \eqref{eq:T012p_formula} with $M=(p^r, 1)$ yields the coefficient $A_{\varpi_1}(p^{k+1},1)$, since the coefficient $A_{\varpi_1}(p^{k-1}, p^2)$ has already been determined. This shows that the Fourier coefficients $A_{\varpi_1}(p^{k_1}, p^{k_2})$ with $k_1+k_2\leq r+1$ can be expressed in terms of $\lambda(p)$ and $\lambda_{0,1}^{(2)}(p)$, finishing the induction. 

Writing $X := p^{-3/2} \lambda(p,\pi)$ and $Y := p^{-2}\rb{\lambda_{0,1}^{(2)}(p,\pi) + 1}$, the Fourier coefficients $A_{\varpi_1}(p^{k_1}, p^{k_2})$ for small $k_i$ are computed in the following table:
{\tiny
\ba
\begin{array}{c | cccccc}
A_{\varpi_1}(p^{k_1},p^{k_2}) & k_2 = 0 & k_2 = 1 & k_2=2 & k_2=3\\
\hline\\
k_1=0 & 1 & X & X^2-Y-1 & X^3-2XY-X \\
\\
k_1=1 & Y & XY-X & \begin{array}{c}X^2Y-X^2\\-Y^2-Y+1\end{array} & \begin{array}{c}X^3Y-X^3\\-2XY^2+2X\end{array} \\
\\
k_1=2 & -X^2+Y^2+Y & \begin{array}{c}-2X^2Y+Y^3\\+X^2+2Y^2-1\end{array} & \begin{array}{c}-X^4+X^2Y^2+X^2Y\\-Y^3+2X^2-2Y^2\end{array} & \begin{array}{c}-X^5+X^3Y^2+2X^3Y\\-2XY^3+2X^3-2XY^2-X\end{array}\\
\\ 
k_1=3 & \begin{array}{c}-2X^2Y+Y^3+X^2\\+2Y^2-1\end{array} & \begin{array}{c}-2X^3Y+XY^3+2X^3\\+XY^2-2X\end{array} & \begin{array}{c}-2X^4Y+X^2Y^3+2X^4\\+3X^2Y^2-Y^4+X^2Y-3Y^3\\-4X^2-Y^2+2Y+1\end{array} & \begin{array}{c}-2X^5Y+X^3Y^3+2X^5\\+5X^3Y^2-2XY^4-X^3Y\\-4XY^3-5X^3+4XY+2X\end{array}
\end{array}
\ea
}
From \Cref{thm:eigenvalue_Fourier}, we obtain $\lambda(p^2, \pi) = p^3(X^2-Y-1) - p^2$. Hence the Fourier coefficients can also be expressed in terms of eigenvalues $\lambda(p^r,\pi)$ of standard Hecke operators.

It is evident from the \Cref{prp:Hecke_equation} that Fourier coefficients are multiplicative, that is,
\begin{align}\label{eq:Fourier_multiplicative}
A_{\varpi_1}(M_1N_1, M_2N_2) = A_{\varpi_1}(M_1,M_2) A_{\varpi_1}(N_1,N_2) \text{ if } (M_1M_2, N_1N_2) = 1.
\end{align}
Using \eqref{eq:Fourier_multiplicative}, and \eqref{eq:Teps_action} for negative coefficients, we are able to compute $A_{\varpi_1}(M)$ for every $M\in\Z^2$.


\end{document}